\begin{document}
\title[Concentrations for simple random walk in random potential]
	{Concentrations for the simple random walk in unbounded nonnegative potentials}
\author[N. KUBOTA]{Naoki KUBOTA}
\address[N. Kubota]{College of Science and Technology, Nihon University,
	Chiba 274-8501, Japan.}
\email{kubota.naoki08@nihon-u.ac.jp}
\keywords{Random walk in random potential, concentrations, Lyapunov exponent, large deviations}
\subjclass[2010]{60K37, 60E15, 60F10}

\begin{abstract}
We consider the simple random walk in i.i.d.\,nonnegative potentials on the multidimensional cubic lattice.
Our goal is to investigate the cost paid by the simple random walk for traveling from the origin to a remote location
in a landscape of potentials.
In particular, we obtain concentration inequalities for the travel cost in unbounded nonnegative potentials.
\end{abstract}

\maketitle

\section{Introduction}

\subsection{The model}
Let $(S_k)_{k=0}^\infty$ be the simple random walk on the $d$-dimensional cubic lattice $\Z^d$, $d \geq 2$.
For $x \in \Z^d$, write $P^x$ for the law of the random walk starting at $x$,
and $E^x$ for the associated expectation.
Furthermore, we consider the measurable space $\Omega:= [0,\infty)^{\Z^d}$
endowed with the canonical $\sigma$-field $\mathcal{G}$.
Let $\P$ be the corresponding product measure on $(\Omega,\mathcal{G})$
and denote an element of $\Omega$ by $\omega=(\omega(x))_{x \in \Z^d}$,
which is called the potential.
To avoid trivialities we suppose that $\omega(0)$ is not almost surely equal to $0$.

For a subset $V$ of $\R^d$, $H_V$ stands for the hitting time of $(S_k)_{k=0}^\infty$ to $V$, i.e.,
\begin{align*}
	H_V:=\inf \{ k \geq 0; S_k \in V \},
\end{align*}
and denote $H(y):=H_{\{ y \}}$ for $y \in \Z^d$.
Furthermore we define for $x,y \in \Z^d$,
\begin{align*}
	e(x,y,\omega ):=E^x \Biggl[ \exp \Biggl\{ -\sum_{k=0}^{H(y)-1} \omega (S_k) \Biggr\} \1{\{ H(y)<\infty \}} \Biggr],
\end{align*}
where $e(x,y,\omega):=1$ if $x=y$.
Let us now introduce the travel cost $a(x,y,\omega)$ from $x$ to $y$
for the simple random walk in a potential $\omega$ as follows:
\begin{align*}
	a(x,y,\omega ):=-\log e(x,y,\omega),\qquad x,y \in \Z^d.
\end{align*}
Throughout this paper, we drop $\omega$ in the notation if there is no confusion.

Note that the strong Markov property shows the subadditivity
\begin{align*}
	a(x,z) \leq a(x,y)+a(y,z),\qquad x,y,z \in \Z^d,
\end{align*}
see Proposition~2 of \cite{Zer98a} for more details.
Since we now treat i.i.d.\,potentials, the subadditive ergodic theorem shows
the following essential asymptotic for the travel cost.
For the proof, we refer the reader to \cite[Proposition~4]{Zer98a}.

\begin{thm}[Zerner]\label{thm:zer}
Assume $\E[\omega(0)]<\infty$.
Then there exists a norm $\alpha(\cdot)$ on $\R^d$ (which is called the Lyapunov exponent) such that
for all $x \in \Z^d$, $\P \hyphen \as$ and in $L^1(\P)$,
\begin{align}\label{eq:lyapunov}
	\lim_{n \to \infty} \frac{1}{n}a(0,nx)
	= \lim_{n \to \infty} \frac{1}{n}\E[a(0,nx)]
	= \inf_{n \geq 1} \frac{1}{n}\E[a(0,nx)]
	= \alpha(x).
\end{align}
Furthermore, $\alpha(\cdot)$ is invariant under permutations of the coordinates and under reflections
in the coordinate hyperplanes, and satisfies
\begin{align}\label{eq:lbd}
	-\log \E[e^{-\omega(0)}] \leq \frac{\alpha(x)}{\| x \|_1} \leq \log(2d)+\E[\omega(0)],
\end{align}
where $\| \cdot \|_1$ is the $\ell^1$-norm on $\R^d$.
\end{thm}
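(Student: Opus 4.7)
The plan is to combine Kingman's subadditive ergodic theorem with one-path estimates on $e(0,nx)$ and with the invariances of the product measure $\P$. Fix $x \in \Z^d \setminus \{0\}$; the subadditivity displayed in the excerpt together with the stationarity of $\P$ under the lattice shift by $x$ sets up a Kingman scheme for the sequence $a(0,nx)$, once $L^1$ integrability is established. For this I would force the walk along a fixed nearest-neighbor path $\gamma$ from $0$ to $nx$ of length $\|nx\|_1$, which yields
\begin{align*}
	e(0,nx) \ge (2d)^{-\|nx\|_1}\exp\Bigl\{-\sum_{y\in\gamma}\omega(y)\Bigr\},
\end{align*}
hence $a(0,nx) \le \|nx\|_1\log(2d)+\sum_{y\in\gamma}\omega(y)$. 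Taking expectations supplies both the $L^1$ bound needed for Kingman and the upper bound in \eqref{eq:lbd}, and the subadditive ergodic theorem then produces the almost sure and $L^1$ limit, together with the variational formula $\alpha(x)=\inf_{n\ge 1}n^{-1}\E[a(0,nx)]$ asserted in \eqref{eq:lyapunov}.

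Next I would promote $\alpha$ from $\Z^d$ to a norm on $\R^d$. Positive homogeneity $\alpha(qx)=q\alpha(x)$ for $q \in \mathbb{N}$ is built into the definition, so $\alpha$ extends uniquely to $\mathbb{Q}^d$ via $\alpha(x/q)=\alpha(x)/q$ and then to $\R^d$ by continuity, the upper Lipschitz bound surviving the limit. Subadditivity of $a$ passes to the limit to give the triangle inequality $\alpha(x+y)\le\alpha(x)+\alpha(y)$. Invariance of $\alpha$ under permutations of the coordinates and reflections in the coordinate hyperplanes is inherited from the corresponding invariance of both the simple random walk and the product measure $\P$ under these lattice isometries.

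The key estimate is the lower bound in \eqref{eq:lbd}, which simultaneously forces $\alpha(x)>0$ for $x\ne 0$ and thereby completes the proof that $\alpha$ is a genuine norm. I would apply Fubini together with the independence of the $\omega(y)$ to get
\begin{align*}
	\E[e(0,nx)] = E^0\Biggl[\prod_{y \in \Z^d}\E\bigl[e^{-N_y\omega(0)}\bigr]\1{\{H(nx)<\infty\}}\Biggr],
\end{align*}
where $N_y$ counts visits to $y$ strictly before $H(nx)$. Since $\omega \ge 0$, each factor with $N_y \ge 1$ is at most $\E[e^{-\omega(0)}] \le 1$, and the walk must pass through at least one site at each $\ell^1$-distance $1,\dots,\|nx\|_1$ from $nx$ before arriving there, giving at least $\|nx\|_1$ indices with $N_y \ge 1$. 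Hence $\E[e(0,nx)] \le \E[e^{-\omega(0)}]^{\|nx\|_1}$, and Jensen applied to the convex function $-\log$ yields $\E[a(0,nx)] \ge -\|nx\|_1\log\E[e^{-\omega(0)}]$; dividing by $n$ closes the lower bound. Since $\omega(0)$ is not almost surely zero, $-\log\E[e^{-\omega(0)}]>0$, confirming positivity of $\alpha$ away from the origin.

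I expect the main obstacle to be this lower bound: specifically, the combinatorial observation that any walk from $0$ to $nx$ traverses at least $\|nx\|_1$ distinct sites (this is what puts the $\ell^1$-norm on both sides of \eqref{eq:lbd}), together with the clean passage from the quenched quantity $\E[-\log e(0,nx)]$ to the annealed quantity $-\log\E[e(0,nx)]$ via Jensen. The remaining ingredients — the Kingman argument, the homogeneous extension, and the coordinate symmetries — are essentially routine once this estimate and the $L^1$ upper bound are in hand.
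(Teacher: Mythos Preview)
The paper does not give its own proof of this theorem: it attributes the result to Zerner and simply refers the reader to \cite[Proposition~4]{Zer98a}. Your proposal is correct and is essentially Zerner's argument---subadditive ergodic theorem fed by the one-path upper bound, plus the Jensen/Fubini lower bound exploiting that any nearest-neighbor path from $0$ to $nx$ visits at least $\|nx\|_1$ distinct sites before arrival---so there is nothing to contrast.
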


\begin{rem}
The present paper always assumes that $\omega(0)$ has at least second moment (see hypotheses (A1) and (A2) below),
so that, for simplicity, we assume $\E[\omega(0)]<\infty$ in the above theorem.
However, it is known that the theorem is valid under lower moments.
In fact, Mourrat~\cite[Theorems~1.1]{Mou12} proved the following:
If $Z$ is the minimum of $2d$ i.i.d.\,random variables distributed as $\omega(0)$,
then for each $x \in \Z^d$,
\begin{align*}
	\E[Z]<\infty \text{ if and only if } \frac{1}{n} a(0,nx) \text{ converges a.s.}
\end{align*}
\end{rem}

\subsection{Main results}
We first introduce the following assumptions:
\begin{enumerate}
 \item[\bf (A1)]
  $\E[e^{\gamma \omega(0)}]<\infty$ for some $\gamma>0$.
 \item[\bf (A2)]
  $\E[\omega(0)^2]<\infty$.
 \item[\bf (A3)]
  The law of $\omega(0)$ has strictly positive support.
\end{enumerate}

The following two theorems are the main results of the present article.
We have the exponential concentration for the upper tail, and the Gaussian concentration for the lower tail.

\begin{thm}\label{thm:conc}
Assume (A1).
In addition, suppose that (A3) is valid if $d=2$.
Then, there exist constants $0<\Cl{1.1},\Cl{1.2}<\infty$ such that for all large $x \in \Z^d$ and for all $t \geq 0$,
\begin{align*}
	\P \bigl( a(0,x)-\E[a(0,x)] \geq t\|x\|_1^{1/2} \bigr)
	\leq \Cr{1.1}e^{-\Cr{1.2}t}.
\end{align*}
\end{thm}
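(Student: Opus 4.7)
\emph{Proof plan.} The plan is to apply the martingale method of bounded differences, adapted to unbounded variables via the exponential moment hypothesis (A1). Fix an enumeration $(y_k)_{k \geq 1}$ of $\Z^d$ and set $\mathcal{F}_k := \sigma(\omega(y_1), \ldots, \omega(y_k))$, so that $M_k := \E[a(0,x) \mid \mathcal{F}_k]$ is a Doob martingale interpolating between $\E[a(0,x)]$ and $a(0,x)$. It then suffices to control the tail of $\sum_{k \geq 1} \Delta_k$, where $\Delta_k := M_k - M_{k-1}$.

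The technical core is a one-site stability estimate: there should exist constants $C, c > 0$ such that whenever $\omega, \omega'$ agree off a single site $y$,
\begin{equation*}
	\bigl|a(0,x,\omega) - a(0,x,\omega')\bigr| \leq C + c\bigl(\omega(y) + \omega'(y)\bigr).
\end{equation*}
I would prove this by a detour argument: assuming $\omega'(y) \geq \omega(y)$ (so $a(0,x,\omega) \leq a(0,x,\omega')$), decomposing $e(0,x,\omega')$ according to whether the walk visits $y$ before hitting $x$, and applying the strong Markov property at $y$ together with a detour through an unchanged neighbor $y^\star$ of $y$. The sojourns at $y$ contribute a geometric series with ratio at most $e^{-\omega(y)}$, yielding the claimed linear bound. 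Combined with the tower property, $|\Delta_k| \leq C + c(\omega(y_k) + \omega''(y_k))$ where $\omega''$ is an independent copy of $\omega$, so (A1) gives a uniform exponential moment for the $\Delta_k$.

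To conclude I would show $\sum_k \E[\Delta_k^2] \leq C\|x\|_1$ via exponential localization of influence: because $\E[e^{-\omega(0)}] < 1$ (using (A3) in the borderline case $d = 2$ to rule out zero-potential sites that could host the recurrent walk), the Gibbs weight $e^{-\sum \omega}$ concentrates the walk near the segment $\{0, x\}$, and the influence of $\omega(y_k)$ on $a(0,x)$ decays exponentially in the distance of $y_k$ from this segment; since the number of sites in any fixed neighborhood of the segment is $O(\|x\|_1)$, the variance bound follows. A Bernstein-type inequality for martingales with exponentially integrable differences then yields
\begin{equation*}
	\P\bigl(a(0,x) - \E[a(0,x)] \geq u\bigr) \leq \exp\bigl(-c\min(u^2/\|x\|_1,\, u)\bigr),
\end{equation*}
and setting $u = t\|x\|_1^{1/2}$ produces $\exp(-c\min(t^2, t\|x\|_1^{1/2}))$, which is at most $\Cr{1.1}e^{-\Cr{1.2}t}$ for $t \geq 1$ (the case $t \leq 1$ being trivial by enlarging $\Cr{1.1}$). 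The main obstacle is establishing the exponential localization in dimension $d=2$ without (A3): a recurrent walk in a potential that can vanish with positive probability may exhibit non-localized fluctuations, which is precisely why (A3) is imposed in that regime, and executing the detour argument quantitatively enough to absorb the exponential moment from (A1) is where the bulk of the analytic work sits.
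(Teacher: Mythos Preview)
Your skeleton (Doob martingale over sites, one-site stability, a variance bound of order $\|x\|_1$, then a Bernstein/Freedman-type inequality) is the same as the paper's. The one-site estimate you describe is essentially Zerner's rank-one perturbation formula, which the paper also invokes. Where your proposal diverges, and where a genuine gap lies, is in the variance step.

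First, a martingale Bernstein inequality does not run on $\sum_k \E[\Delta_k^2]$; it needs control of the \emph{predictable} quadratic variation $\sum_k \E[\Delta_k^2 \mid \mathcal{F}_{k-1}]$, which is random. The paper handles this by bounding $\E[\Delta_k^2 \mid \mathcal{F}_{k-1}] \leq C\,\E[\,Q_{\omega}^{0,x}(H(x)>H(y_k)) \mid \mathcal{F}_{k-1}]$ and then proving an exponential tail for $\sum_k Q_{\omega}^{0,x}(H(x)>H(y_k)) = \hat E_{Q_\omega^{0,x}}[\#\mathcal{A}]$, the expected range of the walk under the path measure. That identity is the mechanism by which ``total influence'' becomes $O(\|x\|_1)$; your phrase ``exponential localization of influence off the segment'' is a plausible heuristic but is neither proved nor the route the paper takes. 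Crucially, obtaining the conditional bound requires the \emph{other} half of the rank-one formula, namely $|a(0,x,\omega)-a(0,x,\omega')| \leq -\log Q_\omega^{0,x}(H(x)\le H(y))$, which you do not mention; the additive bound $C+c(\omega(y)+\omega'(y))$ alone gives no decay in $y$ and cannot produce a linear variance sum.

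Second, the paper does two preliminary reductions you omit: it restricts the walk to a box $V=[-C\|x\|_1,C\|x\|_1]^d$ (so only finitely many sites enter the martingale) and truncates the potential at level $(4d/\gamma)\log\|x\|_1$ (so $|\Delta_k|\le C\log\|x\|_1$ deterministically). Both reductions cost only $O(1)$ in expectation and an exponentially small probability, and they are what make Kesten's concentration theorem applicable. Without them you are summing infinitely many unbounded increments, and the ``Bernstein with exponentially integrable differences'' you invoke would still need a predictable-QV bound you have not established. In short: the missing idea is that the rank-one formula has a second branch tying $\Delta_k^2$ to the $Q$-probability of visiting $y_k$, and the linear variance then comes from a range estimate under $Q$, not from geometric decay off a segment.
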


\begin{thm}\label{thm:gaussian}
Assume (A2).
In addition, suppose that (A3) is valid if $d=2$.
Then, there exists a constant $0<\Cl{1.3}<\infty$ such that for all large $x \in \Z^d$
and for all $t \geq 0$,
\begin{align*}
	\P \bigl( a(0,x)-\E[a(0,x)] \leq -t\| x \|_1^{1/2} \bigr) \leq e^{-\Cr{1.3}t^2}.
\end{align*}
\end{thm}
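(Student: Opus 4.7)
The plan follows the martingale / Efron--Stein method developed for first-passage percolation by Kesten and refined by Benaïm--Rossignol, adapted to the fact that $\omega\mapsto a(0,x,\omega)$ is concave in each coordinate (since $e(0,x,\omega)$ is the expectation of an exponential of a linear function of $\omega$, hence log-convex). I would begin by localising to a finite box $B_x:=[-L,L]^d\cap\Z^d$ with $L=O(\|x\|_1)$: the upper bound in~\eqref{eq:lbd} combined with an elementary large-deviation estimate on the free walk shows that paths contributing non-negligibly to $e(0,x)$ stay in $B_x$ up to exponentially small error, so one may treat $a(0,x)$ as a function of the finite i.i.d.\ family $(\omega(z))_{z\in B_x}$.

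Next, enumerate these vertices as $z_1,\dots,z_N$, let $\omega^{(i)}$ be obtained from $\omega$ by replacing $\omega(z_i)$ with an independent copy $\omega'_i$, and set $\Delta_i:=a(0,x,\omega)-a(0,x,\omega^{(i)})$. The lower-tail variant of the entropy / modified log-Sobolev inequality of Boucheron--Lugosi--Massart gives, for $\lambda>0$ in a suitable range,
\begin{align*}
	\log\E\bigl[e^{\lambda(\E[a(0,x)]-a(0,x))}\bigr]\le \lambda^2\,\E[V^-],\qquad V^-:=\sum_i \E'_i\bigl[(\Delta_i)_-^2\bigr],
\end{align*}
where $\E'_i$ integrates only over $\omega'_i$; so it suffices to show $\E[V^-]=O(\|x\|_1)$ and then optimise in $\lambda$.

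For the pointwise bound, concavity of $a(0,x,\cdot)$ in coordinate $i$, with supergradient equal to the expected local time $\widetilde N_{z_i}(\omega):=E^{\nu_\omega}[\#\{k<H(x):S_k=z_i\}]$ under the quenched Gibbs measure $\nu_\omega$ on paths from $0$ to $x$, combined with the tangent inequality yields, when $\omega'_i>\omega(z_i)$, the estimate
\begin{align*}
	(\Delta_i)_- = a(0,x,\omega^{(i)})-a(0,x,\omega)\le (\omega'_i-\omega(z_i))\,\widetilde N_{z_i}(\omega).
\end{align*}
Under $\E'_i$ the factor $\omega'_i$ is independent of $\widetilde N_{z_i}(\omega)$, and (A2) gives $\E'_i[(\omega'_i-\omega(z_i))_+^2]\le 2\,\E[\omega(0)^2]$; hence
\begin{align*}
	\E[V^-]\le 2\,\E[\omega(0)^2]\;\E\Bigl[\sum_{z\in B_x}\widetilde N_z(\omega)^2\Bigr].
\end{align*}

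The heart of the proof, and the main obstacle, is to establish $\E\bigl[\sum_{z\in B_x}\widetilde N_z(\omega)^2\bigr]\le C\|x\|_1$. For $d\ge 3$ I would use the transience of the simple random walk together with a comparison of $\nu_\omega$ to the free walk (valid since $\omega\ge 0$) to obtain uniform control $\E[\widetilde N_z(\omega)^2]=O(1)$, and then sum using the fact that $E^{\nu_\omega}[H(x)]=O(\|x\|_1)$. In $d=2$ the free walk is recurrent, so this argument fails; here assumption (A3) supplies $\E[e^{-\omega(0)}]<1$, and the Gibbs weight then acts as a positive killing rate making the tilted walk effectively transient, with Green function decaying geometrically. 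This yields the same $\ell^2$ bound and is precisely why (A3) is imposed only in dimension two.
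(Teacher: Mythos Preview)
Your overall framework---the entropy/modified log-Sobolev route for the lower tail---is exactly the one the paper uses, and your tangent bound
\[
(\Delta_i)_- \le (\omega'_i-\omega(z_i))\,\widetilde N_{z_i}(\omega)
\]
is correct (it is a sharpening of one of the two bounds in the paper's rank-one perturbation formula, Lemma~\ref{lem:rankone}). There are, however, two genuine gaps.

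\textbf{The main gap.} You assert as a ``fact'' that $E^{\nu_\omega}[H(x)]=O(\|x\|_1)$, and this is precisely what is \emph{not} available. The paper explicitly flags control of the upper deviation of $H(x)$ as an open difficulty (see the discussion following the statements of the main theorems). What the paper controls instead is the expected \emph{range} $\hat E_\omega^{0,x}[\#\mathcal{A}]$, via a lattice-animal percolation estimate combined with a supermartingale on successive box-exit times (Proposition~\ref{prop:mean_la} and Corollary~\ref{cor:moment_la}). Your quantity $\sum_z \widetilde N_z(\omega)^2$ \emph{can} be reduced to the range: using transience in $d\ge 3$ (or (A3) in $d=2$) one gets $\widetilde N_z\le C\,Q_\omega^{0,x}(H(z)<H(x))$ by a geometric argument on returns to $z$, whence $\sum_z\widetilde N_z^2\le C^2\,\hat E_\omega^{0,x}[\#\mathcal{A}]$. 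But you still need the full content of Proposition~\ref{prop:mean_la} to bound the range; the ``comparison to the free walk'' you sketch does not deliver this, and in particular does not yield $E^{\nu_\omega}[H(x)]=O(\|x\|_1)$ for $d\ge 3$ without (A3).

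\textbf{A secondary gap.} The displayed inequality $\log\E[e^{\lambda(\E[a]-a)}]\le \lambda^2\,\E[V^-]$ does not follow from Boucheron--Lugosi--Massart when $V^-$ is random; the entropy inequality (the paper's Lemma~\ref{lem:ent_bound} and Proposition~\ref{prop:ent_la}) gives instead
\[
Ent(e^{\lambda a(0,x)})\;\le\; C\lambda^2\,\E\bigl[e^{\lambda a(0,x)}\,\hat E_\omega^{0,x}[\#\mathcal{A}]\bigr],
\]
with the troublesome factor $e^{\lambda a(0,x)}$ inside. To close the Herbst argument (Proposition~\ref{prop:herbst}) the paper splits $\hat E_\omega^{0,x}[\#\mathcal{A}]$ into a part bounded by $C\,a(0,x)$ (handled by Chebyshev's association inequality, since $e^{\lambda a}$ is decreasing in $a$ for $\lambda<0$) and a remainder $Y_x$ with uniform exponential moments (handled by the dual formula~\eqref{eq:ent_sup} for entropy). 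Simply bounding $\E[V^-]$ gives an Efron--Stein variance estimate, not a sub-Gaussian tail.
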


In the case where the law of potentials has bounded and strictly positive support,
Ioffe--Velenik~\cite[Lemma~4]{IofVel12} and Sodin~\cite[Theorem~1]{Sod14} proved
the Gaussian concentration:
There exists a constant $0<c<\infty$ such that for all sufficiently large $x \in \Z^d$ and for all $t \geq 0$,
\begin{align}\label{eq:iofvel}
	\P \bigl( |a(0,x)-\E[a(0,x)]| \geq t\| x \|_1^{1/2} \bigr) \leq e^{-ct^2}.
\end{align}
On the other hand, there are few results for unbounded nonnegative potentials.
In this context, Sznitman~\cite[Theorem~2.1]{Szn96} proved exponential concentrations
for Brownian motion in a Poissonian potential.
Our work is a discrete space counterpart of this model as well as an extension of the technique to unbounded potentials.

With these observations, Theorems~\ref{thm:conc} and \ref{thm:gaussian} extend the aforementioned previous works
on concentrations to the case where the law of potentials has unbounded and (not strictly) positive support.

Finally, let us comment on the following problems:
\begin{itemize}
	\item Show the upper Gaussian concentration under the same or a weaker assumption as in Theorem~\ref{thm:conc}
		or a weaker one.
	\item Obtain the lower Gaussian concentration without additional hypothesis (A3) for $d=2$.
\end{itemize}
Intuitively, under the weighted measure appearing in $e(0,x)$,
the random walk tends to pass through sites with small potentials to reach a target point $x$ with lower cost.
If we do not assume (A3), then there will be typically big pockets where potentials are equal to zero.
In such regions the behavior of the random walk seems to be similar to the ordinary simple random walk,
so that the walker takes a lot of time to pass such a region.
This means that $H(x)$ can be large, and this increases the chance that the walker will also pass through sites
with a large potential.
In the case when the potential is unbounded, the walker can encounter very large potentials.
Avoiding such potentials makes the walker deviate a lot from its target point $x$,
and both $H(x)$ and the travel cost become large.
With these observations, for the above problems it is important to analyze the upper deviation of $H(x)$.
However, presently we do not have enough information to establish the Gaussian concentration
(see \cite[Lemmata~1 and 3]{Le13_arXiv} and \cite[Theorems~1.1 and 2.2]{Szn95}),
and seems to need an entirely different approach to do this.
We would like to address this problem in the future research.

\subsection{Earlier literature}
The simple random walk in random potentials is related to a lot of models,
e.g., first passage percolation, directed polymer in random environment, random walk in random environment, and so on.
We first mention Kesten's works~\cite{Kes86,Kes93}
for the first passage percolation on $\Z^d$.
Consider the first passage time $\tau(x,y)$, which corresponds to the travel cost $a(x,y)$, as follows:
Assign independently to each edge $e$ of $\Z^d$ a nonnegative random weight $t_e$ with a common distribution,
and define
\begin{align*}
	\tau(x,y):=\inf\biggl\{ \sum_{e \in r} t_e; r \text{ is a path on } \Z^d \text{ from } x \text{ to } y \biggr\},
	\qquad x,y \in \Z^d.
\end{align*}

The large deviation estimates for the first passage time were shown in \cite{Kes86}
(Theorem~5.2 and Theorem~5.9, the former is a counterpart of Theorem~\ref{thm:upper} above).
His idea is to construct a good self-avoiding path on $\Z^d$ approximating $\tau(0,x)$.
Then the first passage time $\tau(0,x)$ may be regarded as a sum of i.i.d.\,random variables indexed by edges of $\Z^d$,
and a standard large deviation estimate works.
Our model has to treat the random walk, and  we cannot control it around a given self-avoiding path on $\Z^d$.
Hence, Kesten's approach does not work directly.
We will overcome this problem in the proofs of Theorem~\ref{thm:upper}.

Kesten~\cite[Theorem~1]{Kes93} also obtained the exponential concentration inequality for the first passage time,
and its proof is based on a method of bounded martingale differences.
More precisely, let $\{ e_1,e_2,\dots\}$ be an enumeration of edges of $\Z^d$,
and this method represents the difference between $\tau(0,x)$ and its expectation as the sum of the martingale differences
\begin{align*}
	\Delta_i:=\E[\tau(0,x)|\mathcal{F}_i]-\E[\tau(0,x)|\mathcal{F}_{i-1}],\qquad i \geq 1,
\end{align*}
where $\mathcal{F}_0$ is the trivial $\sigma$-field and $\mathcal{F}_i$ is the $\sigma$-field generated by
$t_{e_1},\dots,t_{e_i}$.
In view of Azuma's inequality, bounds for $\Delta_i$'s govern the concentration inequality,
so that we have to estimate $\Delta_i$'s suitably as in Lemmata~\ref{lem:kes-szn1} and \ref{lem:kes-szn2},
see \cite[Theorem~3 and Section~2]{Kes93} for more details.
This method can be applied to any general random variable instead of the first passage time $\tau(0,x)$.
In our model, it is done
by estimating the size of the range of the random walk under the weighted measure appearing in $e(0,x)$.

After that, Talagrand improved Kesten's result to the Gaussian concentration by using convex hull approximation,
see \cite[Proposition~8.3]{Tal95} or \cite[Theorem~8.15]{Led05}.
We can apply this method to some Lipschitz functions.
In fact, for the proof of \eqref{eq:iofvel},
Ioffe--Velenik~\cite{IofVel12} and Sodin~\cite{Sod14} derived the Lipschitz continuity of the travel cost $a(0,x)$
in the case where the law of potentials has bounded and strictly positive support.
Its Lipschitz constant depends on the maximum and the minimum of the support of the law of the potential,
and the constant $c$ in \eqref{eq:iofvel} inherits this dependency
(see the proofs of Lemma~4 in \cite{IofVel12} and Theorem~1 in \cite{Sod14}).
Our goal is to obtain concentration inequalities for the travel cost in unbounded nonnegative potentials.
Hence, the scheme proposed by Talagrand does not seem to be easily applicable here,
and our line of approach is more in the spirit of Kesten.
As mentioned above, the same approach taken in \cite{IofVel12} and \cite{Sod14} may work
if we can control the upper deviation of $H(x)$.

As a related topic, Flury~\cite{Flu07} and Zygouras~\cite{Zyg09} studied the annealed travel cost $b(x,y)$
and its Lyapunov exponent:
\begin{align*}
	b(x,y):=-\log \E \Biggl[
	E^x \Biggl[ \exp \Biggl\{ -\sum_{k=0}^{H(y)-1} \omega (S_k) \Biggr\} \1{\{ H(y)<\infty \}} \Biggr] \Biggr]
\end{align*}
and
\begin{align*}
	\lim_{n \to \infty} \frac{1}{n}b(0,nx)
	= \inf_{n \geq 1} \frac{1}{n}\E[b(0,nx)]
	= \beta(x).
\end{align*}
In particular, Zygouras gave a sufficient condition for the equality of the quenched and the annealed Lyapunov exponents
in $d \geq 4$.

Flury~\cite{Flu08} also investigated the quenched and the annealed partition functions: For $\beta \geq 0$,
\begin{align*}
	Z_{\beta,n}^{\text{qu}}
	:=E^{\text{RW}} \biggl[ \exp\biggl\{ -\beta \sum_{k=0}^n \omega(X_k) \biggr\} \biggr],\quad
	Z_{\beta,n}^{\text{an}}
	:=\E \biggl[ E^{\text{RW}} \biggl[ \exp\biggl\{ -\sum_{k=0}^n \omega(X_k) \biggr\} \biggr] \biggr],
\end{align*}
respectively.
Here $(X_k)_{k=0}^\infty$ is the random walk on $\Z^d$ starting at $0$ with constant drift in the first coordinate direction
and $E^{\text{RW}}$ is the expectation with respect to its law.
Theorem~C of \cite{Flu08} shows that if the drift is positive and $\beta$ is sufficiently small, then
\begin{align*}
	-\frac{1}{n}Z_{\beta,n}^{\text{qu}} \sim -\frac{1}{n}Z_{\beta,n}^{\text{an}}
	\text{ as } n \to \infty.
\end{align*}
Moreover, the boundedness of potentials enables us to estimate its speed of the convergence.
To do this, Flury considered the quenched partition function conditioned on the random walk trajectories
whose range has a proper size (see page~1535 of \cite{Flu08}).
This derives the Lipschitz continuity, and Talagrand's concentration inequality works to compare
the quenched and annealed partition functions.

The large deviation estimates and the concentration inequalities
have also been studied for the directed polymer in random environment.
In this model, we consider i.i.d.\,random variables $(\omega(x,k))_{x \in \Z^d, k \geq 0}$ as space-time potentials,
and the simple random walk $(S_k)_{k=0}^\infty$ on $\Z^d$ as a random walk.
A main object of this model is the quenched partition function $Z_{\beta,n}^{\text{qu}}$ as above.
Refer the reader to articles and lectures \cite{ComShiYos03,ComShiYos04,ComYos06,dH09} for more details.
In this model, due to superadditive arguments, it is known that the upper tail of the large deviations is exponential
for the quenched partition function.
Ben-Ari~\cite{BA09} thus studied the explicit order of the lower tail large deviations.
Corollary~1 of \cite{BA09} provides a necessary and sufficient condition
for order of the lower tail large deviations to be comparable to the exponential tail of the potential.
There are few results for the explicit rate function of this model.
In this context, Georgiou--Sepp\"{a}l\"{a}inen~\cite{GeoSep13} investigated the upper tail large deviations
for a version of the directed polymer in random environment, and
they explicitly gave the form of the upper tail large deviation rate function
for the $(1+1)$-dimensional log-gamma distributed potentials.
We need additional research to decide the lower tail large deviation rate function explicitly,
see Remark~2.4 of \cite{GeoSep13}.
In terms of the concentration inequalities, we can find recent results in \cite{CarHu02,LiuWat09,Mor10,Wat12}.
Talagrand's method for the Lipschitz function is basically used in \cite{CarHu02,Mor10,Wat12}.
Under an exponential tail assumption for the potential, Liu--Watbled~\cite{LiuWat09} and Watbled~\cite{Wat12} basically follow
the martingale method with a generalization of Hoeffding's inequality.
This works very well for the above partition function induced by the simple random walk up to step $n$.
Since our model is governed by the random walk before hitting a site $x$,
one has to derive a good upper tail estimate for $H(x)$ under the weighted measure.
Thus, we cannot directly apply their method as mentioned at the end of the above subsection.

For the standard random walk in random environment, we consider the travel cost
in the case where the potential is always equal to a positive constant,
i.e., the Laplace transform of the hitting time for the random walk in random environment.
It governs the rate function for the large deviation principle for the position of the random walk,
see \cite{Kub12b,RASepYil13,Zer98b}.
The rate function is given by the Legendre transform of the Lyapunov exponents, and 
we expect that our analysis is useful for such a problem.

\subsection{Organization of the paper}
Let us now describe how the present article is organized.
In Section~\ref{sect:uld}, we prepare a large deviation inequality for the upper tail
to prove Theorem~\ref{thm:conc} (see Theorem~\ref{thm:upper}).
To this end, we introduce travel costs restricted to suitable blocks and construct appropriate i.i.d.\,random variables.
A large deviation argument then derives the directional version of Theorem~\ref{thm:upper} (see Lemma~\ref{lem:udirect}).
To extend Lemma~\ref{lem:udirect} uniformly for all directions,
we need a modification of the so-called maximal lemma in \cite[Lemma~7]{Zer98a} under (A1),
which is Lemma~\ref{lem:maximal}.

The goal of Section~\ref{sect:conc} is to prove Theorem~\ref{thm:conc}.
Our main tool here is the martingale method as in \cite[Section~2]{Kes93} or \cite[Section~2]{Szn96}.
To apply this, we have to control an upper bound on how much $a(0,x,\omega)$ may change when $\omega$ is changed.
The rank-one perturbation formula, which is proved by \cite[Lemma~12]{Zer98a}, observes effect
of a change at a single site (see Lemma~\ref{lem:rankone}).
Furthermore, by using Theorem~\ref{thm:upper},
one gives a bound on how much $a(0,x,\omega)$ may change when $\omega$ is changed at finitely many sites.

In Section~\ref{sect:gaussian}, we will show Theorem~\ref{thm:gaussian}.
A main tool is the so-called entropy method.
The entropy has been well-studied in statistical mechanics, and is a fundamental tool
for large deviation principles, see \cite{Ell12_book} for example.
Recently,  Ledoux~\cite{Led05} and Boucheron--Lugosi--Massart~\cite{BouLugMas13_book}
developed concentrations of measure phenomenon by using the entropy method (based on certain
logarithmic Sobolev inequalities).
For the proof of Theorem~\ref{thm:gaussian} we basically follow their approach as well as \cite{DamKub14_arXiv}.
The hypothesis of the theorem guarantees that under the weighted measure appearing in $e(0,x)$,
the size of the range of the random walk grows at most linearly
in the $\ell^1$-norm of a target point, see Proposition~\ref{prop:mean_la}.
This proposition has already mentioned in \cite[Lemma~2]{Le13_arXiv} with only a comment on the proof.
It plays a key role in the entropy method and we give its detailed proof.

We close this section with some general notation.
Write $\| \cdot \|_1$, $\| \cdot \|_2$ and $\| \cdot \|_\infty$
for the $\ell^1$, $\ell^2$ and $\ell^\infty$-norms on $\R^d$, respectively.
Moreover, in each section, we use $C_i$, $i=1,2,\dots$, to denote constants with $0<C_i<\infty$,
whose precise values are not important to us, and whose value may be different in different sections.

\section{A large deviation inequality for the upper tail}\label{sect:uld}
In this section, let us prove the following large deviation inequality for the upper tail.
A part of the proof of Theorem~\ref{thm:conc} relies on it.

\begin{thm}\label{thm:upper}
Assume (A1).
Then, for all $\epsilon>0$ there exist constants $\Cl{2.1},\Cl{2.2}$ such that
for all $x \in \Z^d$,
\begin{align}\label{eq:upper}
	\P (a(0,x)-\alpha(x) \geq \epsilon \| x \|_1 )
	\leq \Cr{2.1}e^{-\Cr{2.2} \| x \|_1}.
\end{align}
\end{thm}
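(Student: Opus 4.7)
The plan is to adapt Kesten's block-decomposition argument for first passage percolation \cite{Kes86} to the random-walk-in-potential setting. The obstacle the author notes is that, unlike in FPP, we cannot pin the random walk to a prescribed self-avoiding path; we circumvent this by introducing travel costs restricted to tubular boxes around straight segments, so that subadditivity yields an upper bound by a genuine i.i.d.\ sum, to which a Cram\'er/Chernoff argument applies.

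First I would define, for $u, v \in \Z^d$ and $L \geq 1$, the restricted travel cost
$$a_L(u, v) := -\log E^u\!\left[\exp\!\biggl(-\sum_{k=0}^{H(v)-1} \omega(S_k)\biggr) \1{\{H(v) < T_L(u,v)\}}\right],$$
where $T_L(u,v)$ is the first exit time of $(S_k)$ from a box of width $L$ around the segment $[u,v]$. Since $a_L(u,v)$ depends only on the potentials inside this box, non-overlapping boxes yield independent restricted costs, and clearly $a(u,v) \leq a_L(u,v)$. Fix a reference vector $y_0 \in \Z^d$ with $M := \|y_0\|_1$ large, and for a target of the form $x = K y_0$ partition $[0,x]$ into $K$ translates $[u_{j-1}, u_j]$ of $[0,y_0]$; subadditivity then gives
$$a(0, x) \leq \sum_{j=1}^{K} a_L(u_{j-1}, u_j),$$
and a minor adjustment of the tubes (e.g.\ selecting every other block and bridging with a canonical path) makes the effective summands i.i.d.\ copies of $a_L(0, y_0)$.

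Next I bound the exponential moment of a single summand. The nearest-neighbor coordinate path $\pi$ from $0$ to $y_0$ has length $M$ and stays inside the tube, so $a_L(0, y_0) \leq M \log(2d) + \sum_{y \in \pi} \omega(y)$, whence under (A1),
$$\E\!\left[e^{\lambda a_L(0, y_0)}\right] \leq (2d)^{\lambda M}\,\E\!\left[e^{\lambda \omega(0)}\right]^{M} \leq e^{C M}, \qquad 0 < \lambda < \gamma.$$
I also need $\E[a_L(0, y_0)] \leq (\alpha(y_0) + \delta) M$ for arbitrarily small $\delta$: choose $M$ large so that the $L^1$ convergence in Theorem~\ref{thm:zer} gives $\E[a(0, y_0)] \leq (\alpha(y_0) + \delta/2) M$, then let $L \to \infty$ so that $a_L(0, y_0) \downarrow a(0, y_0)$ (monotone convergence, legitimized by the exponential moment bound) reduces the gap below $\delta M/2$. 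These two ingredients plug into the standard Chernoff estimate on the i.i.d.\ sum to yield
$$\P\!\bigl(a(0, Ky_0) \geq (\alpha(y_0) + \epsilon) K M\bigr) \leq e^{-c K},$$
which is the directional version of \eqref{eq:upper} along the ray through $y_0$ and constitutes Lemma~\ref{lem:udirect}.

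To promote this to all $x \in \Z^d$ I would cover the unit $\ell^1$-sphere by finitely many rational directions at angular resolution fine enough that continuity of $\alpha$ keeps $|\alpha(x) - \alpha(y)|$ negligible whenever $y$ lies on the selected ray nearest to $x$ and $\|y - x\|_1 \leq \eta \|x\|_1$ for small $\eta$. For such $x$, pick the nearest grid point $y = K y_0^{(i)}$ along the chosen ray, apply the directional bound at $y$, and use the subadditive splitting
$$a(0, x) \leq a(0, y) + a(y, x).$$
The residual $a(y, x)$ is controlled by the modified maximal lemma announced in the text (Lemma~\ref{lem:maximal}), which under (A1) provides exponential control of $\max_{\|z\|_1 \leq n} a(0, z)$; this absorbs the short residual into the $\epsilon \|x\|_1$ budget at the same exponential rate, producing \eqref{eq:upper} uniformly in $x$.

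The hardest step, and the one most specific to the random walk setting, is showing $\E[a_L(0, y_0)]$ is close to $\E[a(0, y_0)]$, i.e.\ that confining the walker to a thin tube around a straight segment does not inflate the expected cost by more than a small fraction of $M$. In FPP one forces a path into the tube outright, but here the walk may want to take long excursions to exploit rare low-potential pockets outside the tube, and quantifying the loss requires combining the deterministic path bound from the exponential-moment step with monotone convergence; this is the place where the adaptation of Kesten's argument is most delicate.
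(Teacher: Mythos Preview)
Your proposal is correct and follows essentially the same route as the paper: tubular restricted costs $a_L$ playing the role of the paper's $a_{m,n}^N(\xi)$, the same monotone/dominated convergence argument to bring $\E[a_L(0,y_0)]$ within $\delta M$ of $\alpha(y_0)$, a Chernoff bound on the subadditive block sum for the directional estimate (Lemma~\ref{lem:udirect}), and then a finite cover of the sphere together with the maximal lemma (Lemma~\ref{lem:maximal}) to pass to all $x$. The only cosmetic difference is that the paper handles the overlap of adjacent tubes by an explicit $L$-dependent decomposition into $L$ i.i.d.\ subsums rather than your thinning-and-bridging, and it executes the Chernoff step via the inequality $e^t \le 1 + te^t$ to choose the small exponential parameter; your ``standard Chernoff'' invocation is equivalent.
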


To do this, we prepare some notation and lemmata.
Write $T_V$ for the exit time of $(S_k)_{k=0}^\infty$ from a subset $V$ of $\R^d$, i.e.,
\begin{align*}
 T_V:=\inf \{ k \geq 0; S_k \not\in V \}.
\end{align*}
Then, we consider the travel cost from $x$ to $y$ restricted to the random walk before exiting $V$ as follows:
\begin{align*}
	a_V(x,y):=-\log e_V(x,y),
\end{align*}
where
\begin{align*}
	e_V(x,y):=E^x \Biggl[ \exp \Biggl\{ -\sum_{k=0}^{H(y)-1} \omega (S_k) \Biggr\}
	\1{\{ H(y)<T_V \}} \Biggr].
\end{align*}

For any $\xi \in \R^d \setminus \{ 0 \}$, let $\mathcal{R}_\xi$ be a rotation of $\R^d$ with
\begin{align*}
	\mathcal{R}_\xi (e_1)=\frac{\xi}{\| \xi \|_2},
\end{align*}
where $e_1$ is the first coordinate direction of $\R^d$.
Then, for $n>m \geq 0$ and $N \geq 0$ we consider a block
\begin{align*}
	\mathcal{P}_{m,n}^N(\xi )
	:= \mathcal{R}_\xi \bigl( [m\| \xi \|_2 -N,n\| \xi \|_2 +N] \times [-N,N]^{d-1} \bigr) \cap \Z^d.
\end{align*}
If $m\xi,n\xi \in \Z^d$, then write $a_{m,n}(\xi):=a(m\xi,n\xi)$
and $T_{m,n}^N(\xi):=T_{\mathcal{P}_{m.n}^N(\xi)}$,
and let $a_{m,n}^N(\xi)$ be the travel cost from $m\xi$ to $n\xi$
restricted to the random walk before exiting $\mathcal{P}_{m,n}^N(\xi)$,
i.e.,
\begin{align*}
	a_{m,n}^N(\xi ):=a_{\mathcal{P}_{m,n}^N(\xi)}(m\xi,n\xi).
\end{align*}
It is clear that $a_{m,n}(\xi) \leq a_{m,n}^N(\xi)$
and the sequence $a_{m,n}^N(\xi)$, $n>m \geq 0$, is subadditive.

\begin{lem}\label{lem:decr}
For all $\xi \in \Z^d \setminus \{ 0 \}$, $a_{m,n}^N(\xi)$ converges decreasingly
to $a_{m,n}(\xi)$ as $N \to \infty$.
\end{lem}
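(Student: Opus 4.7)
The plan is to rephrase the decreasing convergence $a_{m,n}^N(\xi)\downarrow a_{m,n}(\xi)$ as an increasing convergence $e_{m,n}^N(\xi)\uparrow e_{m,n}(\xi)$ of the underlying path integrals, and then deduce the latter from a routine monotone convergence argument. This works because $-\log$ is continuous and strictly decreasing on $(0,\infty)$, and $e_{m,n}(\xi)>0$ since the simple random walk has strictly positive probability of following, for instance, a straight lattice path from $m\xi$ to $n\xi$ in finitely many steps, on which the bounded nonnegative integrand $\exp\{-\sum \omega(S_k)\}$ is $\P$-a.s.\ positive.

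To execute this, first observe that the boxes $\mathcal{P}_{m,n}^N(\xi)$ are nondecreasing in $N$ and that $\bigcup_{N\geq 0}\mathcal{P}_{m,n}^N(\xi)=\Z^d$, since any fixed $y\in\Z^d$ lies in the rotated cylinder as soon as $N$ exceeds the $\ell^\infty$-distance of $\mathcal{R}_\xi^{-1}(y)$ from $[m\|\xi\|_2,n\|\xi\|_2]\times\{0\}^{d-1}$. Consequently, on every trajectory of $(S_k)_{k\geq 0}$, the exit times $T_{m,n}^N(\xi)$ increase to $\infty$, so the events $\{H(n\xi)<T_{m,n}^N(\xi)\}$ form a nondecreasing sequence whose union is $\{H(n\xi)<\infty\}$. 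Therefore the integrands
\begin{equation*}
\exp\Bigl\{-\sum_{k=0}^{H(n\xi)-1}\omega(S_k)\Bigr\}\1{\{H(n\xi)<T_{m,n}^N(\xi)\}}
\end{equation*}
are nonnegative and nondecreasing in $N$, with pointwise limit $\exp\{-\sum_{k=0}^{H(n\xi)-1}\omega(S_k)\}\1{\{H(n\xi)<\infty\}}$. The monotone convergence theorem (applied under $P^{m\xi}$) then gives $e_{m,n}^N(\xi)\uparrow e_{m,n}(\xi)$, and taking $-\log$ yields the claimed monotone convergence of the travel costs.

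There is no real obstacle here; the only minor point worth checking is the finiteness of the limit $a_{m,n}(\xi)$, which is handled by the positivity of $e_{m,n}(\xi)$ noted above. Similarly, one should note that $a_{m,n}^N(\xi)$ is finite for all sufficiently large $N$ (specifically, once $\mathcal{P}_{m,n}^N(\xi)$ contains some lattice path from $m\xi$ to $n\xi$), so the decreasing sequence is eventually finite and the monotone statement is meaningful.
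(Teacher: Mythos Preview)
Your proof is correct and follows essentially the same approach as the paper: both argue that the events $\{H(n\xi)<T_{m,n}^N(\xi)\}$ increase to $\{H(n\xi)<\infty\}$ and then invoke the monotone convergence theorem. You simply spell out more of the details (the exhaustion $\bigcup_N \mathcal{P}_{m,n}^N(\xi)=\Z^d$, the positivity of $e_{m,n}(\xi)$, and eventual finiteness of $a_{m,n}^N(\xi)$) that the paper leaves implicit.
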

\begin{proof}
Since the event $\{ H(n\xi)<T_{m,n}^N(\xi) \}$ is increasing in $N$,
$\1{\{ H(n\xi)<T_{m,n}^N(\xi) \}}$ converges increasingly to $\1{\{ H(n\xi)<\infty \}}$ as $N \to \infty$.
Therefore, the lemma follows from the monotone convergence theorem.
\end{proof}

\begin{lem}\label{lem:udirect}
Assume (A1).
Then, for all $\epsilon >0$ and $\xi \in \Z^d \setminus \{ 0 \}$
there exist constants $\Cl{2.3}=\Cr{2.3}(\xi),\Cl{2.4}=\Cr{2.4}(\xi)$ such that
\begin{align}\label{eq:udirect}
 \P (a_{0,n}(\xi ) \geq n(\alpha (\xi )+\epsilon \| \xi \|_1 ))
 \leq \Cr{2.3}e^{-\Cr{2.4}n\| \xi \|_1},\qquad n \geq 0.
\end{align}
\end{lem}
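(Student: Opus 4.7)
The plan is to bound $a_{0,n}(\xi)$ from above by a sum of i.i.d.\,block-restricted costs $a_{kL,(k+1)L}^M(\xi)$ and then apply a Cram\'er--Chernoff argument. There is an apparent tension here: consecutive blocks $\mathcal{P}_{kL,(k+1)L}^M(\xi)$ always overlap in a slab of width $2M$ transverse to $\xi$, so they are never pairwise independent as a single family; on the other hand, Lemma~\ref{lem:decr} forces us to take $M$ large in order for the restricted cost $a_{0,L}^M(\xi)$ to have expectation close to $\alpha(\xi) L$.

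To reconcile these, I would first observe that for each fixed $M$ the sequence $(a_{0,L}^M(\xi))_{L \geq 1}$ is subadditive in $L$ and stationary under integer shifts along $\xi$, so Kingman's subadditive ergodic theorem yields a deterministic limit $\alpha_M(\xi) := \inf_{L \geq 1}\E[a_{0,L}^M(\xi)]/L$ with $a_{0,L}^M(\xi)/L \to \alpha_M(\xi)$ almost surely and in $L^1$. Finiteness of $\E[a_{0,L}^M(\xi)]$ comes from a deterministic nearest-neighbor path from $0$ to $L\xi$ of length at most $L\|\xi\|_1$ lying inside the block, which gives $a_{0,L}^M(\xi) \leq L\|\xi\|_1\log(2d) + \Sigma$ with $\Sigma$ the sum of $\omega$ along the path, and under (A1) this in fact gives $a_{0,L}^M(\xi)$ finite exponential moments. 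Combining Lemma~\ref{lem:decr} with monotone convergence yields $\E[a_{0,L}^M(\xi)] \searrow \E[a(0,L\xi)]$ as $M \to \infty$, and passing to infima over $L$ gives $\alpha_M(\xi) \searrow \alpha(\xi)$. Given $\epsilon > 0$, I would therefore fix $M$ so that $\alpha_M(\xi) \leq \alpha(\xi) + \epsilon\|\xi\|_1/4$, then fix $L$ large enough that both $\E[a_{0,L}^M(\xi)]/L \leq \alpha_M(\xi) + \epsilon\|\xi\|_1/4$ and $L\|\xi\|_2 > 2M$; the latter inequality makes the even-indexed blocks $\mathcal{P}_{2kL,(2k+1)L}^M(\xi)$, $k \geq 0$, pairwise disjoint, and the same holds for the odd-indexed blocks.

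With these scales fixed, $Y_k^{\mathrm{e}} := a_{2kL,(2k+1)L}^M(\xi)$ are i.i.d.\,with common mean $\mu \leq L(\alpha(\xi) + \epsilon\|\xi\|_1/2)$ and finite exponential moments, and similarly for $Y_k^{\mathrm{o}} := a_{(2k+1)L,(2k+2)L}^M(\xi)$. Writing $n = 2mL + r$ with $0 \leq r < 2L$ and using subadditivity of $a_{\cdot,\cdot}^M(\xi)$ gives $a_{0,n}(\xi) \leq \sum_{k=0}^{m-1}(Y_k^{\mathrm{e}} + Y_k^{\mathrm{o}}) + a(2mL\xi, n\xi)$; a Chernoff bound applied separately to each of the two i.i.d.\,sums, combined with an exponential tail bound on the residual $a(2mL\xi, n\xi)$ (whose law depends only on $r < 2L$ and is again controlled via a deterministic path), yields $\P(a_{0,n}(\xi) \geq n(\alpha(\xi) + \epsilon\|\xi\|_1)) \leq C e^{-cm}$; since $m$ is of order $n/L$ and the constants are allowed to depend on $\xi$, absorbing $\|\xi\|_1/L$ into the exponent delivers the desired bound. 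The main obstacle is precisely this joint choice of scales: disjointness demands $L\|\xi\|_2 > 2M$ while approximating $\alpha(\xi)$ demands $M$ large, and this circular-looking dependency is untangled only by first fixing $M$ via the identity $\alpha(\xi) = \inf_M \alpha_M(\xi)$ supplied by Lemma~\ref{lem:decr} and monotone convergence, and only then sending $L$ to infinity.
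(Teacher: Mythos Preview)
Your argument is correct and follows the same overall strategy as the paper: bound $a_{0,n}(\xi)$ by a sum of block-restricted costs that are identically distributed with finite exponential moments under (A1), exploit the finite range of dependence to split into finitely many i.i.d.\ subfamilies, and apply a Cram\'er--Chernoff bound to each.

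The one noteworthy difference is the order in which you fix the two scales. You first fix the block width $M$ (via the auxiliary quantity $\alpha_M(\xi)=\inf_L L^{-1}\E[a_{0,L}^M(\xi)]$ and its convergence $\alpha_M(\xi)\searrow\alpha(\xi)$), and only then choose the length $L$ large enough both to approximate $\alpha_M(\xi)$ and to satisfy $L\|\xi\|_2>2M$, which forces the even/odd blocks to be disjoint and reduces the dependence range to~$2$. The paper does the opposite: it first fixes the length $\nu$ so that the \emph{unrestricted} mean $\E[a_{0,\nu}(\xi)]$ is close to $\nu\alpha(\xi)$, and then lets the width $N$ grow until $\E[a_{0,\nu}^N(\xi)]$ is close to $\E[a_{0,\nu}(\xi)]$ (Lemma~\ref{lem:decr} plus dominated convergence). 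This sidesteps the ``circular dependency'' you describe without introducing $\alpha_M$, at the cost of a larger dependence range $L=\lceil 2N/\nu\rceil+2$ and hence $L$ residue classes instead of two. Either order works; the paper's is marginally more direct, while yours yields the cleaner two-class decomposition.
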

\begin{proof}
Let $\epsilon >0$ and $\xi \in \Z^d \setminus \{ 0 \}$ be given.
By \eqref{eq:lyapunov} we can choose $\nu=\nu(\xi) \in \N$ such that
\begin{align*}
 \E[a_{0,\nu}(\xi )]\leq \nu \Bigl( \alpha (\xi )+\frac{\epsilon}{5}\| \xi \|_1 \Bigr).
\end{align*}
Note that if $N' \in \N$ is large enough, then we can pick a path $r'$ on $\Z^d$ from $0$ to $\nu\xi$
which is contained in $\mathcal{P}_{0,\nu}^{N'}(\xi)$.
This implies that for all $N \geq N'$,
\begin{align*}
 a_{0,\nu}^N(\xi) \leq \sum_{z \in r'} (\omega(z) +\log(2d) ),
\end{align*}
and Lemma~\ref{lem:decr} and Lebesgue's dominated convergence theorem show
\begin{align*}
 \lim_{N \to \infty} \E[a_{0,\nu}^N(\xi )] =\E[a_{0,\nu}(\xi )].
\end{align*}
With these observations, there is an $N=N(\xi)$ such that
\begin{align}\label{eq:N_choice}
 \E[a_{0,\nu}^N(\xi )]
 \leq \nu \Bigl( \alpha (\xi )+\frac{2\epsilon}{5} \| \xi \|_1 \Bigr).
\end{align}

We now prove \eqref{eq:udirect} with $n=\pi\nu$ and $\pi \in \N$.
Since $\bigcup_{j=0}^{\pi-1} \mathcal{P}_{j\nu,(j+1)\nu}^N(\xi)=\mathcal{P}_{0,\pi\nu}^N(\xi)$
for $\pi \in \N$, the subadditivity shows
\begin{align}\label{eq:blk_sadd}
 a_{0,\nu \pi}^N(\xi ) \leq \sum_{j=0}^{\pi-1} a_{j\nu,(j+1)\nu}^N(\xi ),\qquad \pi \in \N.
\end{align}
The sequence $a_{j\nu,(j+1)\nu}^N(\xi)$, $j \geq 0$, has the following properties:
\begin{itemize}
 \item
  For $L:=\lceil 2N\nu^{-1}  \rceil +2$, $a_{j\nu,(j+1)\nu}^N(\xi)$'s are $L$-dependent,
  i.e., any two sequences $a_{j\nu,(j+1)\nu}^N(\xi)$, $j \in \Lambda$,
  and $a_{j\nu,(j+1)\nu}^N(\xi)$, $j \in \Lambda'$, are independent
  whenever $\Lambda,\,\Lambda' \subset \N_0$ satisfy that $|j-j'|>L$ for all $j \in \Lambda$, $j' \in \Lambda'$.
	\item
		All $a_{j\nu,(j+1)\nu}^N(\xi)$'s have the same distribution as $a_{0,\nu}^N(\xi)$.
\end{itemize}
These, together with \eqref{eq:blk_sadd} and Chebyshev's inequality, imply that for $\gamma' \in (0,\gamma/2)$,
\begin{align*}
 &\P \biggl( a_{0,\pi\nu}^N(\xi )
  \geq \pi \nu \Bigl( \alpha (\xi )+\frac{3\epsilon}{5}\| \xi \|_1 \Bigr) \biggr)\\
 &\leq \sum_{i=0}^{L-1} \P \Biggl(
       \sum_{\substack{0 \leq j \leq \pi-1\\ j \bmod L=i}}
       a_{j\nu, (j+1)\nu}^N(\xi )
       \geq \frac{1}{L} \pi \nu \Bigl( \alpha (\xi )+\frac{3\epsilon}{5}\| \xi \|_1 \Bigr) \Biggr)\\
 &\leq L \exp \Bigl\{ -\frac{\gamma'}{L} \pi \nu \Bigl( \alpha (\xi )
       +\frac{3\epsilon}{5}\| \xi \|_1 \Bigr) \Bigr\}
       \E \bigl[ \exp \{ \gamma' a_{0,\nu}^N(\xi ) \} \bigr]^{\pi/L}.
\end{align*}

We now estimate the expectation above.
By \eqref{eq:N_choice}, one has
\begin{align*}
	\E \bigl[ \exp \{ \gamma' a_{0,\nu}^N(\xi ) \} \bigr]
	&\leq \E \bigl[ \exp \big\{ \gamma' (a_{0,\nu}^N(\xi )-\E[a_{0,\nu}^N(\xi )] ) \bigr\} \bigr]
		\exp \Bigl\{ \gamma' \nu \Bigl( \alpha (\xi )+\frac{2\epsilon}{5}\| \xi \|_1 \Bigr) \Bigr\}.
\end{align*}
Since $e^t \leq 1+te^t$ for $t \in \R$,
this is smaller than
\begin{align}\label{eq:exp_bound}
\begin{split}
	&\Bigl( 1+\gamma' \E \Bigl[ \bigl( a_{0,\nu}^N(\xi )-\E[a_{0,\nu}^N(\xi )] \bigr)
		\exp \bigl\{ \gamma' \bigl( a_{0,\nu}^N(\xi )-\E[a_{0,\nu}^N(\xi )] \bigr) \bigr\} \Bigr] \Bigr)\\
	&\times \exp \Bigl\{ \gamma' \nu \Bigl( \alpha (\xi )+\frac{2\epsilon}{5}\| \xi \|_1 \Bigr) \Bigr\}.
\end{split}
\end{align}
Due to (A1), we can use Lebesgue's dominated convergence theorem to get
\begin{align*}
 \lim_{\gamma' \searrow 0} \E \Bigl[ \bigl( a_{0,\nu}^N(\xi )-\E[a_{0,\nu}^N(\xi )] \bigr)
 \exp \bigl\{ \gamma' \bigl( a_{0,\nu}^N(\xi )-\E[a_{0,\nu}^N(\xi )] \bigr) \bigl\} \Bigr]
 =0.
\end{align*}
This enables us to take $\gamma'=\gamma'(\xi) \in (0,\gamma/2)$ such that
\begin{align*}
 \E \Bigl[ \bigl( a_{0,\nu}^N(\xi )-\E[a_{0,\nu}^N(\xi )] \bigr)
 \exp \bigl\{ \gamma' \bigl( a_{0,\nu}^N(\xi )-\E[a_{0,\nu}^N(\xi )] \bigr) \bigl\} \Bigr]
 \leq \frac{\epsilon}{10}\nu\| \xi \|_1.
\end{align*}
Combining this and \eqref{eq:exp_bound}, one has
\begin{align*}
 \E [ \exp \{ \gamma' a_{0,\nu}^N(\xi ) \} ]
 \leq \Bigl( 1+\frac{\gamma' \epsilon}{10} \nu \| \xi \|_1 \Bigr)
      \exp \Bigl\{ \gamma' \nu \Bigl( \alpha (\xi )+\frac{2\epsilon}{5} \| \xi \|_1 \Bigr) \Bigr\}.
\end{align*}

With these observations, we have for all $\pi \in \N$,
\begin{align*}
 \P \biggl( a_{0,\pi\nu}^N(\xi ) \geq
 \pi \nu \Bigl( \alpha (\xi )+\frac{3\epsilon}{5}\| \xi \|_1 \Bigr) \biggr)
 &\leq L \biggl( \Bigl( 1+\frac{\gamma' \epsilon}{10} \nu \| \xi \|_1 \Bigr)
       \exp \Bigl\{ -\frac{\gamma' \epsilon}{5} \nu \| \xi \|_1 \Bigl\} \biggr)^{\pi/L}\\
 &\leq L\exp \Bigl\{ -\frac{\gamma' \epsilon}{10L} \pi \nu \| \xi \|_1 \Bigr\},
\end{align*}
and hence \eqref{eq:udirect} immediately follows from
the fact that $a_{0,\pi\nu}(\xi) \leq a_{0,\pi\nu}^N(\xi)$ in this case.

If $n=\pi \nu +\tau$ with $0<\tau <\nu$, then the subadditivity yields
\begin{align*}
	a_{0,n}(\xi )
	\leq a_{0,n}^N(\xi )
	\leq a_{0,\pi \nu}^N(\xi )+a_{\pi \nu,n}^N(\xi ).
\end{align*}
We use Chebyshev's inequality to obtain
\begin{align*}
	\P \Bigl( a_{\pi \nu,n}^N(\xi ) \geq \frac{\epsilon}{5}n\| \xi \|_1 \Bigr)
	\leq \exp \Bigl\{ -\frac{\gamma \epsilon}{5}n \| \xi \|_1 \Bigr\}
	\E \bigl[ \exp \{ \gamma (\omega (0)+\log (2d)) \} \bigr]^{\tau \| \xi \|_1}.
\end{align*}
It follows that
\begin{align*}
	&\P \biggl( a_{0,n}(\xi ) \geq n \Bigl( \alpha (\xi )+\frac{4\epsilon}{5} \| \xi \|_1 \Bigr) \biggr)\\
	&\leq \P \biggl( a_{0,\pi \nu}^N(\xi ) \geq
		n \Bigl( \alpha (\xi )+\frac{3\epsilon}{5}\| \xi \|_1 \Bigr) \biggr)
		+\P \Bigl( a_{\pi \nu,n}^N(\xi ) \geq \frac{\epsilon}{5}n\| \xi \|_1 \Bigr)\\
	&\leq L\exp \Bigl\{ -\frac{\gamma' \epsilon}{10L}(n-\tau )\| \xi \|_1 \Bigr\}
		+\exp \Bigl\{ -\frac{\gamma \epsilon}{5} n\| \xi \|_1 \Bigr\}
		\E \bigl[ \exp \{ \gamma (\omega (0)+\log (2d)) \} \bigr]^{\tau \| \xi \|_1}.
\end{align*}
Accordingly, we complete the proof of \eqref{eq:udirect} in all cases
by choosing $\Cr{2.3}$ and $\Cr{2.4}$ suitably.
\end{proof}

To extend Lemma~\ref{lem:udirect} uniformly for all directions,
the next lemma is useful, which is a modification of the maximal lemma proved by \cite[Lemma~7]{Zer98a}.

\begin{lem}\label{lem:maximal}
If (A1) holds, then there exist constants $\Cl{2.5}$, $\Cl{2.6}$ such that
for all $\eta >0$ and $x \in \Z^d$,
\begin{align}\label{eq:maximal}
	\P \bigl( \sup \{ d(x,y); y \in \Z^d,\,\| x-y \|_1 <\eta \| x \|_1 \} \geq \Cr{2.5} \eta \| x \|_1 \bigr)
	\leq e^{-\Cr{2.6} \eta \| x \|_1},
\end{align}
where $d(x,y):=a(x,y) \vee a(y,x)$.
\end{lem}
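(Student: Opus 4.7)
The plan is to refine Zerner's argument in \cite[Lemma~7]{Zer98a} by replacing its first-moment estimate with an exponential Chebyshev bound that exploits (A1), and then to absorb the resulting union bound over lattice sites inside the $\ell^1$-ball of radius $\eta\|x\|_1$. The starting point is a deterministic path bound: for $y \neq x$ with $n := \|x-y\|_1$, I would fix any self-avoiding nearest-neighbor path $(x = z_0, z_1, \ldots, z_n = y)$ of length $n$ (e.g., the canonical monotone path along coordinate axes). Restricting $E^x[\cdots]$ to the walk following this path and using that each simple-walk step has probability $1/(2d)$ gives $e(x,y) \geq (2d)^{-n} \exp\{-\sum_{i=0}^{n-1}\omega(z_i)\}$, hence
\begin{align*}
a(x,y) \leq n\log(2d) + \sum_{i=0}^{n-1}\omega(z_i).
\end{align*}
The reversed path yields the analogous bound for $a(y,x)$, so in either case $d(x,y)$ is controlled by $n\log(2d)$ plus a sum of $n$ i.i.d.\ copies of $\omega(0)$.

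Next I would apply an exponential Markov inequality. By (A1), fix $\gamma' \in (0,\gamma]$; for any $y$ in the ball, with $n = \|x-y\|_1 \leq \eta\|x\|_1$,
\begin{align*}
\P\bigl(d(x,y) \geq \Cr{2.5}\eta\|x\|_1\bigr)
\leq 2\exp\bigl\{-\gamma'(\Cr{2.5} - \log(2d))\eta\|x\|_1\bigr\}\,\E\bigl[e^{\gamma'\omega(0)}\bigr]^{\eta\|x\|_1}.
\end{align*}
Choosing $\Cr{2.5}$ so large that $\gamma'(\Cr{2.5} - \log(2d)) - \log\E[e^{\gamma'\omega(0)}] \geq 2\Cr{2.6}$ for some $\Cr{2.6}>0$, each $y$ contributes at most $2 e^{-2\Cr{2.6}\eta\|x\|_1}$. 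Union-bounding over the at most $C(1 + \eta\|x\|_1)^d$ lattice sites in the ball then produces
\begin{align*}
\P\Bigl(\sup_{y\,:\,\|x-y\|_1 < \eta\|x\|_1} d(x,y) \geq \Cr{2.5}\eta\|x\|_1\Bigr)
\leq 2C(1+\eta\|x\|_1)^d \, e^{-2\Cr{2.6}\eta\|x\|_1}.
\end{align*}

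To finish, I would split into regimes in $\eta\|x\|_1$. Above a universal threshold the polynomial prefactor is dominated by $e^{\Cr{2.6}\eta\|x\|_1}$ and the desired bound follows. For $\eta\|x\|_1 < 1$ the ball reduces to $\{x\}$ and the supremum vanishes. In the bounded intermediate regime only $O(1)$ sites are involved, and enlarging $\Cr{2.5}$ further ensures each individual tail beats the (bounded below) target $e^{-\Cr{2.6}\eta\|x\|_1}$. The hard part is precisely this reconciliation of the polynomial union-bound factor $(1+\eta\|x\|_1)^d$ with the per-site exponential tail; it is the exponential moment hypothesis (A1) that leaves enough room to absorb the polynomial, and this is the key improvement over the first-moment setting of Zerner's original lemma.
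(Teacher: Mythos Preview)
Your proposal is correct and follows essentially the same route as the paper: bound $d(x,y)$ deterministically by $\sum_{z\in r}(\omega(z)+\log(2d))$ along a shortest path $r$, apply the exponential Chebyshev inequality afforded by (A1), and absorb the polynomial union bound over the $\ell^1$-ball by taking the leading constant (your $\Cr{2.5}$, the paper's $c$) large enough. The paper's proof is simply a terser version of yours---it writes the final bound as $(2\eta\|x\|_1+1)^d e^{-c\gamma\eta\|x\|_1}\E[e^{\gamma(\omega(0)+\log(2d))}]^{\eta\|x\|_1}$ and concludes with ``choose $c$ large enough'' rather than spelling out the regime split, but the content is the same.
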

\begin{proof}
For $x,y \in \Z^d$ with $\| x-y \|_1<\eta \| x \|_1$,
let $r$ be the shortest path on $\Z^d$ from $x$ to $y$.
Use Chebyshev's inequality to obtain that for any $c>0$,
\begin{align*}
 \P (d(x,y) \geq c\eta \| x \|_1 )
 &\leq \P \biggl( \sum_{z \in r}(\omega(z)+\log(2d) ) \geq c\eta \| x \|_1 \biggr)\\
 &\leq e^{-c\gamma \eta \| x \|_1}
       \E \bigl[ \exp\{ \gamma (\omega(0) +\log(2d) )\} \bigr]^{\eta \| x \|_1+1}.
\end{align*}
The union bound shows that
\begin{align*}
 &\P \bigl( \sup \{ d(x,y); y \in \Z^d,\,\| x-y \|_1 <\eta \| x \|_1 \} \geq c \epsilon \| x \|_1 \bigr)\\
 &\leq (2\eta \| x \|_1 +1)^d
       e^{-c\gamma \eta \| x \|_1}
       \E \bigl[ \exp\{ \gamma (\omega(0) +\log(2d) )\} \bigr]^{\eta \| x \|_1},
\end{align*}
which proves \eqref{eq:maximal} by choosing $c$ large enough.
\end{proof}

Now we are in a position to prove Theorem~\ref{thm:upper}.

\begin{proof}[\bf Proof of Theorem~\ref{thm:upper}]
Since $S^{d-1}:=\{ y \in \R^d;\| y \|_1=1 \}$ is compact,
we can take a subset $\{ v_1,\dots,v_l \}$ of $S^{d-1} \cap \Q^d$ satisfying that
for any $x \in \R^d \setminus \{ 0 \}$,
there is $i(x) \in [1,l]$ such that
$\| x/\| x \|_1 -v_{i(x)} \|_1<\epsilon/(6C_3)$
and $|\alpha (x/\| x \|_1)-\alpha (v_{i(x)})|<\epsilon/3$.
For each $i \in [1,l]$, choose $M_i \in \N$ with $M_i v_i \in \Z^d$.
Let $x \in \Z^d$ be given and set
\begin{align*}
	n(x):=\biggl\lfloor \frac{\| x \|_1}{M_{i(x)}} \biggr\rfloor,\qquad
	\xi(x) :=M_{i(x)}v_{i(x)}.
\end{align*}
Lemma~\ref{lem:udirect} implies
\begin{align*}
	&\P \biggl( a_{0,n(x)}(\xi (x)) \geq n(x) \Bigl( \alpha (\xi (x))+\frac{\epsilon}{3}\| \xi(x) \|_1 \Bigr) \biggr)\\
	&\leq \Cr{2.3}(\xi (x)) e^{-\Cr{2.4}(\xi (x))n(x)\| \xi(x) \|_1}.
\end{align*}
Let $\Cl{2.7}:=\max_{1 \leq i \leq l} \Cr{2.3}(M_iv_i)$ and $\Cl{2.8}:=\min_{1 \leq i \leq l} \Cr{2.4}(M_iv_i)$.
Since $\xi(x)$ is of the form $M_iv_i$, $\Cr{2.3}(\xi (x)) \leq \Cr{2.7}$ and $\Cr{2.4}(\xi (x)) \geq \Cr{2.8}$
uniformly in $x$.
On the other hand, for all large $x \in \Z^d$,
\begin{align*}
	\| x-n(x)\xi(x) \|_1
	\leq \| x \|_1 \biggl\| \frac{x}{\| x \|_1}-v_{i(x)} \biggr\|_1 +\max_{1 \leq i \leq l}M_i
	< \frac{\epsilon}{3\Cr{2.5}} \| x \|_1,
\end{align*}
so that Lemma~\ref{lem:maximal} implies
\begin{align*}
	&\P (a(0,x)-\alpha (x) \geq \epsilon \| x \|_1 )\\
	&\leq \P \biggl( a_{0,n(x)}(\xi (x)) \geq n(x) \Bigl( \alpha (\xi (x))+\frac{\epsilon}{3} \| \xi(x) \|_1 \Bigr) \biggr)
		+\P \Bigl( d(x,n(x)\xi(x) ) \geq \frac{\epsilon}{3} \| x \|_1 \Bigr)\\
	&\leq \Cr{2.7} e^{-\Cr{2.8}n(x)\| \xi(x) \|_1} +\exp \biggl\{ -\frac{\Cr{2.6}\epsilon}{3\Cr{2.5}} \| x \|_1 \biggr\}.
\end{align*}
Notice that
\begin{align*}
 n(x)\| \xi(x) \|_1
 \geq \biggl( \frac{\| x \|_1}{M_{i(x)}}-1 \biggr)M_{i(x)}
 \geq \| x \|_1-\max_{1 \leq i \leq l}M_i,
\end{align*}
and the theorem immediately follows.
\end{proof}

\section{The concentration for the upper tail}\label{sect:conc}
To show Theorem~\ref{thm:conc}, we basically follow from the strategy taken
in \cite[Section~2]{Kes93} or \cite[Section~2]{Szn96}.
In Subsection~\ref{subsect:compare}, let us observe how much $a(0,x,\omega)$ may change when $\omega$ is changed.
Observations of Subsection~\ref{subsect:compare} allow us to apply the martingale argument
taken in \cite[Section~2]{Kes93}.
The proof of Theorem~\ref{thm:conc} is done in Subsection~\ref{subsect:pf_conc}.

\subsection{Comparison of travel costs}\label{subsect:compare}
For $M \in \N$ we consider the boxes $B(q):=Mq+[0,M)^d$, $q \in \Z^d$.
These boxes form a partition of $\R^d$, and let $q(x)$ be the index such that $x \in B(q(x))$.
Fix $\kappa >0$ satisfying $\P(\omega(0) \geq \kappa)>0$.
Given $\omega \in \Omega$,
$B(q)$ is said to be occupied if there is a site $x \in B(q)$ such that $\omega (x) \geq \kappa$.
Let $\tau_0=0$ and define for $i \geq 0$,
\begin{align*}
	&\rho_{i+1}:=\inf \{ k>\tau_i; S_k \text{ reaches an occupied box} \},\\
	&\tau_{i+1}:=\inf \{ k>\rho_{i+1}; S_k \not\in B(q(S_{\rho_{i+1}})) \}.
\end{align*}
Furthermore, an $\ell^1$-lattice animal is a finite $\ell^1$-connected set of $\Z^d$,
when the adjacency relation of two sites $v_1,v_2 \in \Z^d$ is defined as $\|v_1-v_2\|_1=1$.
\label{page:def_animal}

\begin{lem}\label{lem:restrict}
There exist constants $\Cl{3.1}$, $\Cl{3.2}$, $\Cl{3.3}$ such that if $m$ is large enough,
then, on an event $A_m$ with $\P(A_m^c) \leq \Cr{3.1}e^{-\Cr{3.2}m}$, we have
\begin{align*}
	e(0,x) \leq e^{-\Cr{3.3}m}
\end{align*}
for all $x \in \Z^d$ with $\|x\|_1=m$.
\end{lem}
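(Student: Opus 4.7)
The plan is to combine a ``bad point in each occupied box'' argument with a lattice-animal estimate showing that the walk from $0$ to any $x$ with $\|x\|_1=m$ must pay a linearly-in-$m$ cost.

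First, I would fix $M \in \N$ large enough that the probability $q := 1-(1-p)^{M^d}$ of a box being occupied (with $p := \P(\omega(0) \geq \kappa) > 0$) is close enough to $1$ that one can find $\eta \in (0,q)$ satisfying $I(\eta,q) > \lambda_d$, where $I(\cdot,q)$ denotes the Cram\'er rate function for Bernoulli$(q)$ and $e^{\lambda_d n}$ is an upper bound on the number of $\ell^1$-lattice animals of size $n$ containing $B(0)$. For each $\omega$ and each occupied box $B$ I pick (measurably) a site $y_B \in B$ with $\omega(y_B) \geq \kappa$; by translation invariance of the simple random walk there is a constant $p_0 = p_0(M,d) > 0$ such that $P^z(H(y_B) < T_B) \geq p_0$ for every $z \in B$.

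Next, I would define the good event
\[
A_m := \bigl\{ \omega : \text{every } \ell^1\text{-animal } \mathcal{A} \ni B(0) \text{ with } |\mathcal{A}| \geq \lceil m/(2Md) \rceil \text{ has } |\mathcal{A} \cap \mathcal{O}| \geq \eta|\mathcal{A}|\bigr\},
\]
where $\mathcal{O}=\mathcal{O}(\omega)$ is the random set of occupied boxes. A Chernoff bound for each fixed animal combined with a union bound over animals (and over sizes $n \geq \lceil m/(2Md)\rceil$) yields $\P(A_m^c) \leq c_1 e^{-c_2 m}$. On $A_m$, fix any $x$ with $\|x\|_1=m$: since consecutive boxes visited by the walk are $\ell^1$-adjacent, the boxes visited before hitting $x$ form an $\ell^1$-animal containing $B(0)$ and $B(q(x))$, whose cardinality is at least $\lceil m/(2Md)\rceil$ for $m$ large (because $\|q(x)-q(0)\|_1 \geq (m-dM)/M$). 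Hence this animal contains at least $K := \lceil \eta m/(2Md)\rceil$ distinct occupied boxes, and therefore $N(H(x)) := \#\{i : \rho_i < H(x)\} \geq K$ on $A_m \cap \{H(x) < \infty\}$.

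For the exponential bound, I set $X_i := \1{\{\text{walk visits } y_{B_i} \text{ during } [\rho_i,\tau_i)\}}$ with $B_i$ the box entered at $\rho_i$. The elementary inequality $\exp\{-\sum_{k=\rho_i}^{\tau_i - 1}\omega(S_k)\} \leq 1 - X_i(1 - e^{-\kappa})$ yields, on $A_m$,
\[
e(0,x,\omega) \leq E^0\Bigl[\,\prod_{i=1}^{K} \bigl(1 - X_i(1-e^{-\kappa})\bigr)\1{\{\rho_K < H(x)\}}\Bigr].
\]
Iterated applications of the strong Markov property at $\rho_K, \rho_{K-1}, \ldots, \rho_1$, combined with the uniform lower bound $E^{S_{\rho_i}}[X_i \mid \mathcal{F}_{\rho_i}] \geq p_0$ on $\{\rho_i < \infty\}$, collapse this to $(1 - p_0(1 - e^{-\kappa}))^K \leq e^{-c_3 m}$, which gives the desired bound.

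The main obstacle is the lattice-animal step: one needs $q$ close enough to $1$ that the Chernoff rate $I(\eta,q)$ exceeds the entropy rate $\lambda_d$ of animals, which is what forces $M$ to be chosen sufficiently large (note $I(\eta,q)\to\log(1/(1-q))$ as $\eta\to 0^+$, so this can be arranged by taking $M^d$ large). Once $M$ is fixed this way, the remaining excursion/Markov bookkeeping and the fraction-of-animal-visited estimate are essentially routine.
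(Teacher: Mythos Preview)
Your proposal is correct and follows essentially the same route as the paper's proof: a lattice-animal union bound (with $M$ chosen large so the occupation probability beats the animal entropy) to guarantee that a fixed fraction of the boxes traversed are occupied, followed by a strong-Markov product bound over the successive excursions $[\rho_i,\tau_i)$ through occupied boxes. The only cosmetic differences are that the paper simply fixes the fraction at $1/2$ rather than introducing $\eta$ and $I(\eta,q)$, and replaces your uniform hitting probability $p_0$ by the explicit crude bound $(2d)^{-M}$.
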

\begin{proof}
Let $A'_l$ be the event that for any $\ell^1$-lattice animal $\Gamma$ containing $0$ with $\#\Gamma=l$,
\begin{align*}
	\sum_{q \in \Gamma} \1{\{B(q) \text{ is occupied}\}} \geq \frac{l}{2}.
\end{align*}
Notice that the number of $\ell^1$-lattice animals of size $l$ is bounded by $(2^d)^{2l}=4^{dl}$
(see Lemma~1 of \cite{CoxGanGriKes93}).
Since $\P(B(0) \text{ is occupied}) \to 1$ as $M \to \infty$,
for $M$ large enough, standard exponential estimates on the binomial distribution show
\begin{align}\label{eq:bin_conc}
	\P((A'_l)^c)
	\leq 4^{dl} \P \biggl( \sum_{q \in \Gamma} \1{\{B(q) \text{ is occupied}\}}<\frac{l}{2} \biggr)
	\leq e^{-\Cr{3.4}l}
\end{align}
for some constant $\Cl{3.4}$.
Let $A_m$ be the event that for any $\ell^1$-lattice animal $\Gamma$ on $\Z^d$ containing $0$ of the size bigger than $m/M$,
\begin{align*}
	\sum_{q \in \Gamma} \1{\{ B(q) \text{ is occupied}\}} \geq \frac{\#\Gamma}{2}.
\end{align*}
By \eqref{eq:bin_conc}, there exist some constants $\Cr{3.1}$ and $\Cr{3.2}$ such that
\begin{align*}
	\P(A_m^c) \leq \sum_{l=1}^L \P((A'_l)^c) \leq \Cr{3.1}e^{-\Cr{3.2}m},
\end{align*}
where $L:=\lfloor m/M \rfloor$.

For $x \in \Z^d$ with $\|x\|_1=m$, the random walk from $0$ to $x$ must pass through
at least $L$ boxes $B(q)$.
Hence, on the event $A_m$,
\begin{align*}
	e(0,x)
	&\leq E^0 \Biggl[ \prod_{i=1}^L \exp \Biggl\{ -\sum_{k=\rho_i}^{\tau_i-1}\omega (S_k) \Biggr\} \Biggr].
\end{align*}
If $z \in \Z^d$ is in an occupied box, then
\begin{align*}
	E^z \Biggl[ \exp \Biggl\{ -\sum_{k=0}^{\tau_1-1}\omega (S_k) \Biggr\} \Biggr]
	&\leq E^z \Biggl[ \exp \Biggl\{ -\sum_{k=0}^{\tau_1-1}\kappa \1{\{ \omega (S_k) \geq \kappa \}} \Biggr\}
		\Biggr]\\
	&\leq 1-(1-e^{-\kappa}) P^z \Bigl( \max_{0 \leq k<\tau_1} \omega(S_k) \geq \kappa \Bigr)\\
	&\leq 1-(1-e^{-\kappa}) \Bigl( \frac{1}{2d} \Bigr)^M.
\end{align*}
We thus use the strong Markov property to obtain that on the event $A_m$,
there exists a constant $\Cr{3.3}$ such that
\begin{align*}
	e(0,x)
	\leq \biggl\{ 1-(1-e^{-\kappa}) \Bigl( \frac{1}{2d} \Bigr)^M \biggr\}^L
	\leq e^{-\Cr{3.3}m},
\end{align*}
which completes the proof.
\end{proof}

\begin{prop}\label{prop:compare}
Assume (A1).
Then, there exist constants $\Cl{3.5}$, $\Cl{3.6}$, $\Cl{3.7}$ such that for $x \in \Z^d \setminus \{0\}$
and for $V=[-\Cr{3.5}\|x\|_1,\Cr{3.5}\|x\|_1]^d$,
\begin{align}\label{eq:compare}
	\P \bigl( a(0,x)<a_V(0,x)-\log 2 \bigr) \leq \Cr{3.6}e^{-\Cr{3.7}\|x\|_1}
\end{align}
and
\begin{align}\label{eq:com_exp}
	\sup_{x \in \Z^d} \bigl| \E[a_V(0,x)]-\E[a(0,x)] \bigr|<\infty.
\end{align}
\end{prop}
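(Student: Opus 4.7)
The strategy is to decompose $e(0,x)$ according to whether the walk stays in $V$, and to bound the excess by the contribution of trajectories that leave $V$. Applying the strong Markov property at the exit time $T_V$ and noting that on $\{S_{T_V}=y\}$ we have $H(y)=T_V$, one obtains the pointwise inequality
\begin{align*}
e(0,x)-e_V(0,x)\leq\sum_{y\in\partial_{\mathrm{ext}}V}e(0,y)\,e(y,x).
\end{align*}
For $V=[-C_5\|x\|_1,C_5\|x\|_1]^d$ with $C_5>1$ the outer boundary has $O(\|x\|_1^{d-1})$ sites, each satisfying $\|y\|_1\geq C_5\|x\|_1$ and $\|y-x\|_1\geq(C_5-1)\|x\|_1$.

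Each boundary factor is controlled via Lemma~\ref{lem:restrict}. Summing the failure probability in that lemma over sphere radii $m\geq C_5\|x\|_1$ gives an event of probability $\geq 1-Ce^{-c\|x\|_1}$ on which $e(0,y)\leq e^{-c_0\|y\|_1}$ for all $y$ with $\|y\|_1\geq C_5\|x\|_1$, where $c_0$ denotes the constant $\Cr{3.3}$ of Lemma~\ref{lem:restrict}. For the second factor $e(y,x)$ I would apply Lemma~\ref{lem:restrict} to the translated potential $\theta_y\omega$, using the identity $e(y,x)(\omega)=e(0,x-y)(\theta_y\omega)$ and stationarity of $\P$; a union bound over the polynomially many $y\in\partial_{\mathrm{ext}}V$ still keeps the failure probability exponentially small in $\|x\|_1$ and yields $e(y,x)\leq e^{-c_0\|y-x\|_1}$ simultaneously for all boundary sites. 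On the resulting intersection,
\begin{align*}
\sum_{y\in\partial_{\mathrm{ext}}V}e(0,y)\,e(y,x)\leq C\|x\|_1^{d-1}\,e^{-c_0(2C_5-1)\|x\|_1}.
\end{align*}

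A matching lower bound on $e(0,x)$ follows from Theorem~\ref{thm:upper} with $\epsilon=1$ together with \eqref{eq:lbd}: on another event of probability $\geq 1-Ce^{-c\|x\|_1}$ one has $e(0,x)\geq\exp\{-(\log(2d)+\E[\omega(0)]+1)\|x\|_1\}$. Choosing $C_5$ so large that $c_0(2C_5-1)$ strictly exceeds $\log(2d)+\E[\omega(0)]+1+\log 2$ makes the boundary sum at most $\tfrac12 e(0,x)$, which yields \eqref{eq:compare}. For \eqref{eq:com_exp} the inequality $a_V\geq a$ reduces the task to bounding $\E[(a_V-a)\1{A^c}]\leq\E[a_V\1{A^c}]$, where $A$ denotes the event in \eqref{eq:compare}. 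Dominating $a_V(0,x)$ by the cost $\|x\|_1\log(2d)+\sum_{z\in\pi}\omega(z)$ along a shortest $\ell^1$-path $\pi$ from $0$ to $x$ (which lies in $V$ as soon as $C_5\geq 1$), and using the second moment of $\omega$ implied by (A1), gives $\E[a_V^2]=O(\|x\|_1^2)$. Cauchy-Schwarz together with $\P(A^c)\leq Ce^{-c\|x\|_1}$ then yields $\E[a_V\1{A^c}]\leq C\|x\|_1 e^{-c\|x\|_1/2}$, uniformly bounded in $x$.

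The main difficulty is to rule out, pointwise in $\omega$, a nontrivial contribution from random walk paths that leave $V$: Lemma~\ref{lem:restrict} provides precisely the exponential decay of $e(0,y)$ for large $\|y\|_1$ that is needed, and its uniformity over a full sphere makes the union bound cost only a polynomial factor. Transferring the lemma from the origin to every boundary site proceeds by translation invariance, which is inexpensive since $\omega$ is i.i.d.; the box size $C_5$ is then tuned to overpower the (at most linear) a priori lower bound on $a(0,x)$ coming from Theorem~\ref{thm:upper}.
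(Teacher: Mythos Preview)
Your argument is correct and rests on the same two inputs as the paper's proof: Lemma~\ref{lem:restrict} to control the cost of trajectories that escape $V$, and Theorem~\ref{thm:upper} (with~\eqref{eq:lbd}) to lower-bound $e(0,x)$. The paper organizes these slightly more economically: instead of decomposing over exit sites $y\in\partial_{\mathrm{ext}}V$ and bounding each $e(0,y)\,e(y,x)$ separately, it bounds the whole escaping contribution
\[
E^0\Bigl[\exp\Bigl\{-\sum_{k=0}^{T_V-1}\omega(S_k)\Bigr\}\1{\{T_V<\infty\}}\Bigr]\le e^{-C_8 m(c)}
\]
in one stroke on the event $A_{m(c)}$ of Lemma~\ref{lem:restrict} (the walk must cross $\sim m(c)/M$ boxes before exiting $V$), so no translation of the lemma to each boundary site is needed. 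In particular your second factor $e(y,x)$ is superfluous---bounding it by $1$ already works. Your Cauchy--Schwarz argument for \eqref{eq:com_exp} spells out what the paper records only as ``an immediate consequence of \eqref{eq:compare}.''
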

\begin{proof}
Put $m(c)=\lceil c\|x\|_1 \rceil$ and $V(c):=[-m(c),m(c)]^d$ for $c>0$.
From Theorem~\ref{thm:upper} and Lemma~\ref{lem:restrict},
the left side of \eqref{eq:compare} is bounded from above by
\begin{align}\label{eq:zero}
\begin{split}
	&\Cr{3.1} e^{-\Cr{3.2}c\|x\|_1}\\
	&+\P \bigl( \{ a(0,x)<a_{V(c)}(0,x)-\log 2, \,a(0,x)-\alpha(x) \leq \|x\|_1 \} \cap A_{m(c)} \bigr),
\end{split}
\end{align}
where $A_{m(c)}$ is the event as in Lemma~\ref{lem:restrict}.
Our task is now to estimate the last probability.
Thanks to Lemma~\ref{lem:restrict}, there exists a constant $\Cl{3.8}$ such that on the event $A_{m(c)}$,
\begin{align*}
	&E^0 \Biggl[ \exp \Biggl\{ -\sum_{k=0}^{H(x)-1}\omega(S_k) \Biggr\} \1{\{ T_{V(c)}<H(x)<\infty \}} \Biggr]\\
	&\leq E^0 \Biggl[ \exp \Biggl\{ -\sum_{k=0}^{T_{V(c)}-1}\omega(S_k) \Biggr\} \1{\{ T_{V(c)}<\infty \}} \Biggr]
	\leq e^{-\Cr{3.8}m(c)}.
\end{align*}
It follows that on the event $\{ a(0,x)-\alpha(x) \leq \|x\|_1 \} \cap A_{m(c)}$,
\begin{align}	\label{eq:again}
	\alpha (x)+\|x\|_1
	\geq a(0,x)
	\geq -\log \bigl( e^{-\Cr{3.8}m(c)}+e^{-a_{V(c)}(0,x)} \bigr).
\end{align}
Choose $c$ large enough.
If $a_{V(c)}(0,x)>\Cr{3.8} m(c)$ holds, then \eqref{eq:lbd} and \eqref{eq:again} derive
\begin{align*}
	\alpha (x)+\|x\|_1
	\geq -\log 2+\Cr{3.8}m(c)
	> \alpha (x)+\|x\|_1.
\end{align*}
This is a contradiction,
and hence $a_{V(c)}(0,x) \leq \Cr{3.8} m(c)$.
This, combined with the second inequality in \eqref{eq:again}, proves that 
on the event $\{ a(0,x)-\alpha(x) \leq \|x\|_1 \} \cap A_{m(c)}$,
\begin{align*}
	a(0,x) \geq -\log \bigl( e^{-a_{V(c)}(0,x)}+e^{-a_{V(c)}(0,x)} \bigr)=a_{V(c)}(0,x)-\log 2.
\end{align*}
Therefore, the last probability in \eqref{eq:zero} is equal to zero,
and we finished the proof of \eqref{eq:compare}.
On the other hand, \eqref{eq:com_exp} is an immediate consequence of \eqref{eq:compare}.
\end{proof}

Fix an arbitrary $x \in \Z^d \setminus \{ 0 \}$, and define
\begin{align*}
	\hat{\omega}(\cdot):=\omega (\cdot) \wedge \frac{4d}{\gamma} \log \|x\|_1
\end{align*}
with $\gamma$ as in assumption (A1).
For $V$ as in the above proposition, we consider
\begin{align*}
	\tilde{a}(0,x):=a_V(0,x),\qquad \hat{a}(0,x):=\tilde{a}(0,x,\hat{\omega}).
\end{align*}
Then, the next proposition, which is our goal in this subsection, observes that these travel costs are comparable.

\begin{prop}\label{prop:trunc}
Assume (A1).
In addition, suppose that (A3) is valid if $d=2$.
Then, there exists a constant $\Cl{3.9}$ such that
\begin{align}\label{eq:ptrunc}
	\P \bigl( \tilde{a}(0,x)-\hat{a}(0,x) \geq u \bigr) \leq \Cr{3.9}e^{-(\gamma /2)u},\qquad u \geq 0,
\end{align}
and
\begin{align} \label{eq:pexpect}
	\sup_{x \in \Z^d} \bigl| \E[\tilde{a}(0,x)]-\E[\hat{a}(0,x)] \bigr| <\infty.
\end{align}
\end{prop}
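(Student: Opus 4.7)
The strategy is to bound $\tilde a(0,x) - \hat a(0,x)$, which is nonnegative since $\hat\omega \le \omega$, by the total excess potential $\sum_{y \in V}(\omega(y)-T)^+$ at sites of $V$, where $T = (4d/\gamma)\log\|x\|_1$. Once this bound is in hand, the exponential tail in \eqref{eq:ptrunc} will come from Markov's inequality applied to this i.i.d.\,sum, and \eqref{eq:pexpect} will follow by integrating the tail.

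\textbf{Single-site perturbation and iteration.} For fixed $y \in V$, lowering $\omega(y)$ to $\hat\omega(y) = \omega(y) \wedge T$ changes $a_V(0,x,\omega)$ by at most a constant times $(\omega(y)-T)^+$ on a good event. The key tool is Lemma~\ref{lem:rankone} (the rank-one perturbation formula), which expresses the change as $\log\bigl(1 + p_y(R_y - 1)\bigr)$, where $p_y$ is a weighted visit probability at $y$ and $R_y$ is a local excursion ratio at $y$, both depending only on $\omega$ off $y$. Theorem~\ref{thm:upper} is then invoked to control these quantities by pinning $a(0,x)$ near $\alpha(x)$ outside an event of probability at most $Ce^{-c\|x\|_1}$; when $d=2$, hypothesis (A3) is used here to rule out degenerate behaviour of the weighted walk. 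Iterating the single-site bound over the finitely many sites $y \in V$ at which $\omega(y)>T$ yields
\[
 \tilde a(0,x) - \hat a(0,x) \;\le\; C \sum_{y \in V}(\omega(y)-T)^+
\]
on a good event $\mathcal G$ with $\P(\mathcal G^c) \le C e^{-c\|x\|_1}$, while on $\mathcal G^c$ a crude deterministic bound on $\tilde a$ contributes only to the constants in \eqref{eq:ptrunc} and \eqref{eq:pexpect}.

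\textbf{Tail of the excess sum.} By independence and Markov's inequality with $\lambda = \gamma/2 \in (0,\gamma)$,
\[
 \P\Bigl(\sum_{y \in V}(\omega(y)-T)^+ \ge u\Bigr) \;\le\; e^{-\lambda u}\prod_{y \in V}\E\bigl[e^{\lambda(\omega(0)-T)^+}\bigr].
\]
A direct computation using (A1) gives $\E\bigl[e^{\lambda(\omega(0)-T)^+}\bigr] \le 1 + e^{-\lambda T}\E[e^{\lambda\omega(0)}] \le 1 + C\|x\|_1^{-2d}$ by the choice $T = (4d/\gamma)\log\|x\|_1$. Since $|V| \le C\|x\|_1^d$ (by the definition of $V$ from Proposition~\ref{prop:compare}), the product is at most $\exp(C\|x\|_1^{-d}) \le e^C$ uniformly in $x$. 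Combining with the previous step yields \eqref{eq:ptrunc} with rate $\gamma/2$; then \eqref{eq:pexpect} follows at once from $\E[\tilde a - \hat a] = \int_0^\infty \P(\tilde a - \hat a > u)\,du$, a bound independent of $x$.

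\textbf{Main obstacle.} The delicate step is the single-site Lipschitz estimate: a naive rank-one computation gives an excursion ratio $R_y$ as large as $e^{\omega(y)-T}$, so a universal Lipschitz constant is not automatic. The essential work is to combine Theorem~\ref{thm:upper} with the logarithmic growth of $T$ in $\|x\|_1$ to show that the compensating weighted visit probability $p_y$ is small enough to absorb the blow-up into a fixed constant on $\mathcal G$.
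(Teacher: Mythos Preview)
Your overall architecture matches the paper's: telescope $\tilde a-\hat a$ over the sites of $V$, bound each single-site increment via Lemma~\ref{lem:rankone}, then apply Markov's inequality at exponent $\gamma/2$ to the resulting sum of independent pieces. Your tail computation for $\sum_{y\in V}(\omega(y)-T)^+$ is essentially the paper's as well.

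Where you diverge is the single-site step, and here you have manufactured an obstacle that does not exist. You invoke Theorem~\ref{thm:upper} to produce a good event $\mathcal G$ on which the visit probability $p_y$ is to compensate the excursion blow-up $R_y$. But the \emph{second} alternative in Lemma~\ref{lem:rankone} already gives, for every configuration almost surely,
\[
 f([\hat\omega,\omega]_i)-f([\hat\omega,\omega]_{i+1})
 \;\le\; (\omega_{i+1}-\hat\omega_{i+1})
 + \frac{1}{1-\min\{e^{-\hat\omega_{i+1}},\,P^0(H_2(0)<\infty)\}}.
\]
In $d\ge 3$ transience gives $P^0(H_2(0)<\infty)<1$; in $d=2$ hypothesis (A3) forces $\hat\omega_{i+1}\ge \mathrm{ess\,inf}\,\omega(0)>0$ once $T$ exceeds that infimum. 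Either way the additive term is a universal constant, so the telescoped bound
\[
 \tilde a(0,x)-\hat a(0,x)\;\le\;\sum_{i=1}^M \1{\{\omega_i>T\}}\bigl(\omega_i-T+\text{const.}\bigr)
 \;\le\;\sum_{i=1}^M \1{\{\omega_i>T\}}\,\omega_i
\]
(the last inequality for $\|x\|_1$ large) holds on the whole probability space, with no appeal to Theorem~\ref{thm:upper}.

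As you wrote it, the detour through $\mathcal G$ does not close. The term $\P(\mathcal G^c)\le Ce^{-c\|x\|_1}$ is constant in $u$ and cannot be absorbed into a bound of the form $C' e^{-(\gamma/2)u}$ uniformly over $u\ge 0$; there is no deterministic ceiling on $\tilde a-\hat a$ that makes the probability vanish for large $u$. In addition, the prefactor $C$ you place before the excess sum would degrade the exponential rate from $\gamma/2$ to $\gamma/(2C)$, whereas the proposition specifies $\gamma/2$. Both issues disappear once you read off the second bound in Lemma~\ref{lem:rankone} directly: that is where (A3) in $d=2$ actually enters, not via any control of the weighted walk coming from Theorem~\ref{thm:upper}.
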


For the convenience, we here refer to the following rank-one perturbation formula obtained in \cite[Lemma~{12}]{Zer98a}.
This gives an upper bound on how much $a(0,x,\omega)$ may change when $\omega$ is changed  at a single site.
We omit the proof and refer the reader to that of \cite[Lemma~{12}]{Zer98a}.

\begin{lem}[Zerner]\label{lem:rankone}
Let $y \in \Z^d$ and $\omega,\sigma \in \Omega$ such that
$\omega(z)=\sigma(z)$ for $z \not=y$ and $\omega(y) \leq \sigma(y)$.
Then, for $V=\Z^d$ or $[-\Cr{3.5}\|x\|_1,\Cr{3.5}\|x\|_1]^d$, $a_V(0,x,\sigma)-a_V(0,x,\omega)$ is nonnegative,
and is bounded from above by the minimum of
\begin{align*}
	-\log Q_\omega^{0,x}(H(x) \leq H(y))
\end{align*}
and
\begin{align*}
	\sigma(y)-\omega(y)+\frac{1}{1-\min\{ e^{-\omega(y)},P^0(H_2(0)<\infty) \}},
\end{align*}
where $H_2(0)$ is the time of the second visit of $0$ for the random walk,
and $Q_\omega^{0,x}$ is the probability measure such that
\begin{align*}
	\frac{dQ_\omega^{0,x}}{dP^0}
	= e(0,x,\omega)^{-1} \exp \Biggl\{ -\sum_{k=0}^{H(x)-1} \omega(S_k) \Biggr\} \1{\{ H(x)<\infty \}}.
\end{align*}
\end{lem}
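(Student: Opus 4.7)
The plan is to handle the three assertions in turn. Nonnegativity is immediate from monotonicity: since $\sigma\geq\omega$ pointwise, the integrand defining $e_V(0,x,\sigma)$ is pointwise dominated by its $\omega$-counterpart, so $e_V(0,x,\sigma)\leq e_V(0,x,\omega)$ and $a_V(0,x,\sigma)-a_V(0,x,\omega)\geq 0$. Both upper bounds exploit the fact that a perturbation at the single site $y$ affects only trajectories that visit $y$ before reaching $x$.

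For the first bound I would split the expectation defining $e_V(0,x,\sigma)$ along the event $\{H(x)\leq H(y)\}$. On this event none of $S_0,\dots,S_{H(x)-1}$ equals $y$ (the trivial cases $y\in\{0,x\}$ being handled separately), so the $\sigma$- and $\omega$-sums coincide, giving
\[
e_V(0,x,\sigma)\;\geq\; E^0\!\Bigl[e^{-\sum_{k=0}^{H(x)-1}\omega(S_k)}\1{\{H(x)<T_V,\,H(x)\leq H(y)\}}\Bigr]\;\geq\; e_V(0,x,\omega)\,Q_\omega^{0,x}(H(x)\leq H(y)),
\]
where the second step uses $e_V(0,x,\omega)\leq e(0,x,\omega)$ to identify the $Q_\omega^{0,x}$-probability. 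Taking negative logarithms yields the first claimed bound.

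For the second bound I would decompose paths from $0$ to $x$ that remain in $V$ according to their excursion structure at $y$: an initial leg $0\to y$ avoiding $\{x\}$ and revisits of $y$, then $n-1$ excursions $y\to y$ avoiding $\{x\}$, and a final leg $y\to x$ avoiding $\{y\}$. Factoring out the $e^{-\omega(y)}$ incurred at each of the $n$ visits to $y$ and summing the resulting geometric series produces
\[
e_V(0,x,\omega)=\tilde e_V+\frac{A\,e^{-\omega(y)}C}{1-e^{-\omega(y)}B},\qquad e_V(0,x,\sigma)=\tilde e_V+\frac{A\,e^{-\sigma(y)}C}{1-e^{-\sigma(y)}B},
\]
where $\tilde e_V$ collects paths avoiding $y$ and the nonnegative quantities $A,B,C$ do not depend on $\omega(y)$. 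Dominating the integrand of $B$ by $1$ yields $B\leq P^y(H_2(y)<\infty)=P^0(H_2(0)<\infty)$, and hence $e^{-\omega(y)}B\leq\theta:=\min\{e^{-\omega(y)},P^0(H_2(0)<\infty)\}$. With $\Delta:=\sigma(y)-\omega(y)$ and $e^{-\sigma(y)}=e^{-\omega(y)}e^{-\Delta}$, a short manipulation reduces the target inequality $e_V(0,x,\sigma)\geq e_V(0,x,\omega)\exp\{-\Delta-1/(1-\theta)\}$ to
\[
e^{1/(1-\theta)}(1-e^{-\omega(y)}B)\;\geq\; 1-e^{-\omega(y)}B\,e^{-\Delta},
\]
which in turn follows from $e^{1/(1-\theta)}(1-\theta)\geq 1$, i.e.\ the elementary inequality $\log x\leq x$ at $x=1/(1-\theta)$.

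The main technical step is the excursion bookkeeping: one must verify that $A$, $B$, $C$ genuinely do not depend on $\omega(y)$ (which forces the $\omega(y)$-factors to be extracted exactly once per visit) and that the geometric series converges, which requires $\theta<1$—a restriction that fails only when $\omega(y)=0$ and $d\leq 2$, in which case the stated bound equals $+\infty$ and is vacuous. The boundary cases $y\in\{0,x\}$ are trivial since $\omega(y)$ is either not counted in the sum at all ($y=x$) or counted only at $k=0$ identically for $\sigma$ and $\omega$ ($y=0$).
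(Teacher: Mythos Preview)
The paper does not prove this lemma; it cites Zerner and omits the argument, so there is nothing in the paper to compare against. Your overall strategy---monotonicity for nonnegativity, splitting on $\{H(x)\le H(y)\}$ for the first bound, and an excursion/geometric-series decomposition at $y$ for the second---is the standard one and is carried out correctly for the second bound; the reduction to $e^{1/(1-\theta)}(1-\theta)\ge 1$ is clean.

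There is, however, a gap in your first bound when $V$ is the box. The claimed step
\[
E^0\!\Bigl[e^{-\sum_{k=0}^{H(x)-1}\omega(S_k)}\1{\{H(x)<T_V,\,H(x)\le H(y)\}}\Bigr]\;\ge\; e_V(0,x,\omega)\,Q_\omega^{0,x}(H(x)\le H(y))
\]
does not follow from $e_V\le e$. The left-hand side equals $e_V(0,x,\omega)\,Q_{V,\omega}^{0,x}(H(x)\le H(y))$, with $Q_{V,\omega}^{0,x}$ the $T_V$-restricted path measure; the inequality $e_V\le e$ only yields $e_V\cdot Q_\omega^{0,x}(\cdot)\le e\cdot Q_\omega^{0,x}(\cdot)$, which points the wrong way. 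What your argument actually establishes is the bound $-\log Q_{V,\omega}^{0,x}(H(x)\le H(y))$, and there is no general monotone relation between $Q_{V,\omega}^{0,x}(H(x)\le H(y))$ and $Q_{\omega}^{0,x}(H(x)\le H(y))$. For $V=\Z^d$ the two coincide and your chain is fine; for the box this step needs either a different justification or the statement should be read with the restricted measure (which would serve equally well in all of the paper's applications).

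A smaller slip: the case $y=0$ is not trivial in the way you claim. The term $\omega(0)$ \emph{is} counted at $k=0$, where it differs from $\sigma(0)$, and it may appear again whenever the walk revisits $0$ before $H(x)$. Your excursion decomposition already covers this case (take $\tilde e_V=0$ and $A=1$), and the first bound is vacuous there since $H(0)=0$ forces $Q_\omega^{0,x}(H(x)\le H(0))=0$; only the explanation is off.
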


\begin{proof}[\bf Proof of Proposition~\ref{prop:trunc}]
Recall $V=[-\Cr{3.5}\|x\|_1,\Cr{3.5}\|x\|_1]^d$, and then let $V \cap \Z^d=\{ x_1,\dots,x_M \}$.
To shorten notation, set $\omega_i:=\omega(x_i)$ for $1 \leq i \leq M$.
Since $\tilde{a}(0,x,\omega)$ depends only on configurations in $V$, we can write
\begin{align*}
	f(\omega_1,\dots,\omega_M ):=\tilde{a}(0,x,\omega).
\end{align*}
Then, one has
\begin{align*}
	0 \leq \tilde{a}(0,x,\omega)-\hat{a}(0,x,\omega)
	&= f(\omega_1,\dots,\omega_M )-f(\hat{\omega}_1,\dots,\hat{\omega}_M )\\
	&= \sum_{i=0}^{M-1} \bigl\{ f([\hat{\omega},\omega]_i)-f([\hat{\omega},\omega]_{i+1}) \bigr\},
\end{align*}
where $[\hat{\omega},\omega]_0:=\omega$ and
\begin{align*}
	[\hat{\omega},\omega]_i
	:= (\hat{\omega}_1,\dots,\hat{\omega}_i,\omega_{i+1},\dots,\omega_M),\qquad 1 \leq i \leq M.
\end{align*}
Thanks to (A3) in $d=2$ and transience of the simple random walk in $d \geq 3$,
Lemma~\ref{lem:rankone} shows that there exists a constant $\Cl{zerner}$ such that
the left side of \eqref{eq:ptrunc} is smaller than or equal to
\begin{align*}
	&\P \Biggl( \sum_{i=0}^{M-1}
		\bigl\{ f([\hat{\omega},\omega]_i )-f([\hat{\omega},\omega]_{i+1}) \bigr\} \geq u \Biggr)\\
	&\leq \P \Biggl( \sum_{i=1}^M \1{\{ \omega_i \not= \hat{\omega}_i \}}
			\bigl\{ \omega_i-\hat{\omega}_i+\Cr{zerner} \bigr\} \geq u \Biggr)\\
	&= \P \Biggl(
		\sum_{i=1}^M \1{\{ \omega_i>(4d/\gamma) \log \|x\|_1 \}}
		\biggl\{ \omega_i-\frac{4d}{\gamma}\log \|x\|_1+\Cr{zerner} \biggr\} \geq u \Biggr).
\end{align*}
Therefore, for sufficiently large $x \in \Z^d$, one has the following upper bound on the left side of \eqref{eq:ptrunc}:
\begin{align*}
	\P \Biggl( \sum_{i=1}^M \1{\{ \omega_i>(4d/\gamma)\log \|x\|_1 \}} \omega_i \geq u \Biggr).
\end{align*}
In addition, we can estimate this as follows:
\begin{align*}
	&e^{-(\gamma/2)u}\biggl\{ 1+\int_{(4d/\gamma)\log \|x\|_1}^\infty (e^{(\gamma/2) s}-1)
		\,\P (\omega (0) \in ds) \biggr\}^M\\
	&\leq e^{-(\gamma/2)u} \bigl( 1+\|x\|_1^{-2d} \E[e^{\gamma \omega(0)}] \bigr)^M\\
	&\leq \exp \Bigl\{ -\frac{\gamma u}{2}+M\|x\|_1^{-2d} \E[e^{\gamma \omega (0)}] \Bigr\}.
\end{align*}
Since $V=[-\Cr{3.5}\|x\|_1,\Cr{3.5}\|x\|_1]^d$, $M$ is of order $\|x\|_1^d$.
Hence, \eqref{eq:ptrunc} follows.
On the other hand,  \eqref{eq:pexpect} is an immediate consequence of \eqref{eq:ptrunc}.
\end{proof}

\subsection{Proof of Theorem~\ref{thm:conc}}\label{subsect:pf_conc}
For the proof of Theorem~\ref{thm:conc}, our main tool is the martingale method as in \cite{Kes93} or \cite{Szn96}.
Throughout this subsection, we always assume (A1).
In addition, suppose that (A3) is valid if $d=2$.

For any sufficiently large $x \in \Z^d$, one has
\begin{align*}
	&\P \bigl( a(0,x)-\E[a(0,x)] \geq t\|x\|_1^{1/2} \bigr)\\
	&\leq \P \biggl( \tilde{a}(0,x)-\E[\tilde{a}(0,x)] \geq \frac{t}{3} \|x\|_1^{1/2} \biggr)
		+\P \biggl( \E[\tilde{a}(0,x)]-\E[a_V(0,x)] \geq \frac{t}{3} \|x\|_1^{1/2} \biggr).
\end{align*}
Then, \eqref{eq:com_exp} of Proposition~\ref{prop:compare} shows that for $t$ large enough, this is equal to
\begin{align*}
	\P \biggl( \tilde{a}(0,x)-\E[\tilde{a}(0,x)] \geq \frac{t}{3}\ \|x\|_1^{1/2} \biggr).
\end{align*}
From Proposition~\ref{prop:trunc}, this is smaller than
\begin{align}\label{eq:ccompare}
	\Cr{3.9} \exp \Bigl\{ -\frac{\gamma}{18} t\|x\|_1^{1/2} \Bigr\}
	+\P \biggl( \hat{a}(0,x)-\E[\hat{a}(0,x)] \geq \frac{t}{9} \|x\|_1^{1/2} \biggr).
\end{align}

To estimate the last probability, we will prepare some notation and lemmata.
For the enumerations $x_i$ and $\omega_i$ as in the proof of Proposition~\ref{prop:trunc},
let $\mathcal{F}_0$ be the trivial $\sigma$-field and $\mathcal{F}_i$ $\sigma$-field generated
by $\omega_1,\dots,\omega_i$.
Moreover, define
\begin{align*}
	\Delta_i:= \E[\hat{a}(0,x)|\mathcal{F}_i]-\E[\hat{a}(0,x)|\mathcal{F}_{i-1}],\qquad 1 \leq i \leq M.
\end{align*}

\begin{lem}\label{lem:kes-szn1}
Assume (A1).
In addition, suppose that (A3) is valid if $d=2$.
Then, there exist some constants $\Cl{3.10}$, $\Cl{3.11}$ independent of $x$ such that for all $1 \leq i \leq M$,
\begin{align}\label{eq:difbd}
	|\Delta_i| \leq \Cr{3.10} \log \|x\|_1
\end{align}
and
\begin{align}\label{eq:marti}
\begin{split}
	\E[\Delta_i^2 |\mathcal{F}_{i-1}]
	&\leq \Cr{3.11} \E \Bigl[ Q_{\hat{\omega}}^{0,x}(H(x)>H(x_i)) \Big| \mathcal{F}_{i-1} \Bigr].
\end{split}
\end{align}
\end{lem}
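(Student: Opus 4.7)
I would follow the classical resampling-based martingale approach of Kesten~\cite{Kes93} and Sznitman~\cite{Szn96}. Introduce an independent copy $(\omega'_j)_{j \geq 1}$ of $(\omega_j)_{j \geq 1}$, and for each $1 \leq i \leq M$ set
\begin{align*}
\omega^{(i)} := (\omega_1,\ldots,\omega_{i-1},\omega'_i,\omega_{i+1},\ldots,\omega_M).
\end{align*}
Because $\omega'_i$ has the same law as $\omega_i$ and is independent of all other coordinates, one obtains the identity $\E[\hat{a}(0,x,\omega) \mid \mathcal{F}_{i-1}] = \E[\hat{a}(0,x,\omega^{(i)}) \mid \mathcal{F}_i]$, so that
\begin{align*}
\Delta_i = \E \bigl[ \hat{a}(0,x,\omega) - \hat{a}(0,x,\omega^{(i)}) \bigm| \mathcal{F}_i \bigr].
\end{align*}
Conditional Jensen's inequality then reduces both \eqref{eq:difbd} and \eqref{eq:marti} to corresponding (pointwise, resp.\ conditional) control of $|\hat{a}(0,x,\omega) - \hat{a}(0,x,\omega^{(i)})|$.

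To bound this difference, I would apply the rank-one formula (Lemma~\ref{lem:rankone}) to the pair $\omega^{\min}, \omega^{\max}$ obtained by taking the componentwise minimum and maximum of $\omega$ and $\omega^{(i)}$ at the $i$-th coordinate. The second alternative of that lemma yields
\begin{align*}
|\hat{a}(0,x,\omega) - \hat{a}(0,x,\omega^{(i)})| \leq |\hat{\omega}_i - \hat{\omega}'_i| + \frac{1}{1 - \min\bigl\{ e^{-\hat{\omega}_i \wedge \hat{\omega}'_i},\, P^0(H_2(0) < \infty) \bigr\}}.
\end{align*}
The truncation $\hat{\omega}(\,\cdot\,) \leq (4d/\gamma)\log\|x\|_1$ bounds the first summand by $(4d/\gamma)\log\|x\|_1$, while the second summand is uniformly bounded in two regimes: in $d \geq 3$ transience of the simple random walk gives $P^0(H_2(0) < \infty) < 1$, and in $d = 2$ assumption (A3) produces a deterministic lower bound $\omega_{\ast} > 0$ on $\omega(z)$, so that $e^{-\hat{\omega}_i \wedge \hat{\omega}'_i} \leq e^{-\omega_{\ast}}$ once $\|x\|_1$ is large enough for the truncation to exceed $\omega_{\ast}$. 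Combined, these estimates establish \eqref{eq:difbd}.

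For \eqref{eq:marti} I would use the sharper first alternative of Lemma~\ref{lem:rankone}, which gives $|\hat{a}(0,x,\omega) - \hat{a}(0,x,\omega^{(i)})| \leq -\log Q_{\omega^{\min}}^{0,x}(H(x) \leq H(x_i))$. Writing $p = Q_{\omega^{\min}}^{0,x}(H(x) > H(x_i))$, I would split according to whether $p \leq 1/2$, using $-\log(1-p) \leq 2p$, or $p > 1/2$, reverting to the crude $\log\|x\|_1$ bound combined with $\1{\{p > 1/2\}} \leq 2p$. Squaring and combining the two regimes yields a pointwise majorization of $|\hat{a}(0,x,\omega) - \hat{a}(0,x,\omega^{(i)})|^2$ by a constant multiple of $Q_{\omega^{\min}}^{0,x}(H(x) > H(x_i))$, where the logarithmic factors appearing in the intermediate bounds are absorbed using the bounded-moments property of $\hat\omega_i - \hat\omega'_i$ under~(A1). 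A Radon--Nikodym comparison between $Q_{\omega^{\min}}^{0,x}$ and $Q_{\hat{\omega}}^{0,x}$ — which differ only at the single site $x_i$, with ratio controlled by the truncation level and by either transience or (A3) — then delivers \eqref{eq:marti} after taking $\E[\,\cdot\,\mid \mathcal{F}_{i-1}]$.

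\textbf{Main obstacle.} The principal technical hurdle is uniform control of the reciprocal term $1/(1-\min\{e^{-\hat\omega_i\wedge\hat\omega'_i},\,P^0(H_2(0)<\infty)\})$ appearing in Lemma~\ref{lem:rankone}: without either transience or (A3) this quantity can blow up as $\hat\omega(x_i) \to 0$, which is precisely the reason (A3) is imposed in $d = 2$. A secondary delicate point for \eqref{eq:marti} is the passage from $Q_{\omega^{\min}}^{0,x}$ to $Q_{\hat{\omega}}^{0,x}$, for which one needs the boundedness of the truncated potentials at the perturbed coordinate to keep the Radon--Nikodym factors and the induced constants uniform in $\omega$ and $x$.
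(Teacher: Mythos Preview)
Your treatment of \eqref{eq:difbd} is essentially the paper's and is correct. The gap lies in \eqref{eq:marti}. In the regime $p>1/2$ you invoke the crude bound $|\hat a(0,x,\omega)-\hat a(0,x,\omega^{(i)})|\leq\Cr{3.10}\log\|x\|_1$, which after squaring and multiplying by $\1{\{p>1/2\}}\leq 2p$ yields a majorant of order $(\log\|x\|_1)^2\cdot p$; the resulting constant in \eqref{eq:marti} would then depend on $x$, contrary to the statement. The remedy you indicate --- that the logarithmic factors are ``absorbed using the bounded-moments property of $\hat\omega_i-\hat\omega'_i$'' --- does not work as written: if you replace the crude bound by the second alternative $|\hat\omega_i-\hat\omega'_i|+C$ from Lemma~\ref{lem:rankone}, the square $(|\hat\omega_i-\hat\omega'_i|+C)^2$ is \emph{correlated} with $p=Q_{\omega^{\min}}^{0,x}(H(x)>H(x_i))$, since $\omega^{\min}$ carries $\hat\omega_i\wedge\hat\omega'_i$ at the site $x_i$; the conditional expectation of their product therefore does not factor into a moment times $\E[p\mid\mathcal F_{i-1}]$. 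The proposed Radon--Nikodym passage from $Q_{\omega^{\min}}^{0,x}$ to $Q_{\hat\omega}^{0,x}$ is likewise not controlled in the way you suggest: the density ratio along a path involves the number of visits to $x_i$ before $H(x)$ and is not uniformly bounded by the truncation level or by (A3)/transience alone.

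The paper's device is a symmetrization that removes both difficulties at once. Restricting the double integral to $\{\omega'_i\leq\omega_i\}$ (at the cost of a factor $2$), the smaller of the two truncated configurations carries $\hat\omega'_i$ at $x_i$. Lemma~\ref{lem:rankone} then gives, on the one hand, the first-alternative bound in terms of the $Q$-measure for this smaller configuration, a quantity independent of $\omega_i$; and on the other hand the second-alternative bound $\omega_i+\text{const}$, which depends only on $\omega_i$. On the event where the $Q$-probability of avoiding $x_i$ is below $1/2$ one uses the latter: integrating $(\omega_i+\text{const})^2$ over $\P(d\omega_i)$ separates out and contributes the finite constant $\E[(\omega(0)+\text{const})^2]$. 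On the complementary event one uses the former together with $(\log t)^2\leq 1-t$ for $t\in[1/2,1]$. Finally, because the smaller configuration has $\omega'_i$ at site $x_i$ and $\omega'$ has the same law as $\omega$, integrating its $Q$-measure already yields $\E[Q_{\hat\omega}^{0,x}(H(x)>H(x_i))\mid\mathcal F_{i-1}]$ directly --- no Radon--Nikodym comparison is needed.
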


\begin{lem}\label{lem:kes-szn2}
Assume (A1) and let
\begin{align*}
	U_i:=\Cr{3.11} Q_{\hat{\omega}}^{0,x}(H(x)>H(x_i)).
\end{align*}
Then, there exist some constants $\Cl{ksxo}$ and $\Cl{3.12}$ independent of $x$
such that $\Cr{ksxo}>e^2\Cr{3.10}^2$, and for all $u \geq \Cr{ksxo}\|x\|_1$,
\begin{align*}
	\P \Biggl( \sum_{i=1}^M U_i \geq u \Biggr) \leq e^{-\Cr{3.12}u}.
\end{align*}
\end{lem}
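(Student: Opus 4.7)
The plan is to recognize $\sum_{i=1}^M U_i$ as $\Cr{3.11}$ times the expected range of the walk (up to $H(x)$, intersected with $V$) under the tilted measure $Q_{\hat{\omega}}^{0,x}$, and then to control that expectation via the occupied-box structure used in the proof of Lemma~\ref{lem:restrict}.

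First I would rewrite
\[
\sum_{i=1}^M U_i = \Cr{3.11}\, E_{Q_{\hat{\omega}}^{0,x}}\!\bigl[\,|\{S_k : 0 \leq k < H(x)\} \cap V|\,\bigr] \leq \Cr{3.11}\, M^d\, E_{Q_{\hat{\omega}}^{0,x}}[N_B],
\]
where $N_B$ denotes the number of distinct boxes $B(q)$ visited by the walk strictly before hitting $x$; the last inequality follows because each box contains at most $M^d$ sites.

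Next I would estimate $E_{Q_{\hat{\omega}}^{0,x}}[N_B]$ on a good event defined as the intersection of $A_m$ from Lemma~\ref{lem:restrict} (with $m=\|x\|_1$) and the upper-bound event $\{\hat{a}(0,x) \leq C^*\|x\|_1\}$; the latter has probability at least $1 - C_1 e^{-c_1\|x\|_1}$ by combining Propositions~\ref{prop:compare} and \ref{prop:trunc} with Theorem~\ref{thm:upper}. Iterating the per-box estimate $E^z[\exp\{-\sum_{k<\tau_1}\hat{\omega}(S_k)\}] \leq 1 - \delta$ (valid for $z$ in an occupied box, with $\delta := (1-e^{-\kappa})(1/(2d))^M$) via the strong Markov property as in the proof of Lemma~\ref{lem:restrict}, and using that on $A_m$ any lattice animal of box indices of size $\geq N$ containing $q=0$ has at least $N/2$ occupied boxes, one obtains
\[
E^0\!\bigl[\1{\{H(x)<T_V,\, N_B \geq N\}}\, e^{-\sum_{k=0}^{H(x)-1}\hat{\omega}(S_k)}\bigr] \leq (1-\delta)^{N/2}.
\]
Dividing by $e_V(0,x,\hat{\omega})$ then yields $Q_{\hat{\omega}}^{0,x}(N_B \geq N) \leq \exp(\hat{a}(0,x) - (\delta/2)N)$ on the good event, and integrating this tail gives $E_{Q_{\hat{\omega}}^{0,x}}[N_B] \leq C_2 \|x\|_1$ there.

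Putting the ingredients together, $\sum_{i=1}^M U_i \leq \Cr{3.11} M^d C_2 \|x\|_1$ on the good event. Setting $\Cr{ksxo} := (\Cr{3.11} M^d C_2) \vee (e^2\Cr{3.10}^2 + 1)$, I conclude for $u \geq \Cr{ksxo}\|x\|_1$ that
\[
\P\!\Bigl(\sum_{i=1}^M U_i \geq u\Bigr) \leq C_1 e^{-c_1\|x\|_1} \leq C_1 e^{-(c_1/\Cr{ksxo})u},
\]
completing the proof with $\Cr{3.12} := c_1/\Cr{ksxo}$. The main obstacle is the iterated strong Markov step that yields the exponential decay of $Q_{\hat{\omega}}^{0,x}(N_B \geq N)$: one must carry the constraint $H(x) < T_V$ through the successive box entries while cleanly factoring $(1-\delta)$ across them, and verify the combinatorial passage from $N_B \geq N$ to at least $N/2$ occupied box visits on $A_m$. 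The auxiliary constraint $\Cr{ksxo} > e^2\Cr{3.10}^2$ is imposed for the subsequent use of this lemma in Theorem~\ref{thm:conc} and poses no difficulty here, since enlarging $\Cr{ksxo}$ merely shrinks $\Cr{3.12}$.
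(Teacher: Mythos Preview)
Your identification of $\sum_{i=1}^M U_i$ with $\Cr{3.11}$ times the expected size of the range under $Q_{\hat\omega}^{0,x}$ is correct, and the occupied-box machinery from Lemma~\ref{lem:restrict} can indeed be pushed to give information about this quantity. However, the final step of your argument contains a genuine gap. You conclude
\[
\P\Bigl(\sum_{i=1}^M U_i \geq u\Bigr) \leq C_1 e^{-c_1\|x\|_1} \leq C_1 e^{-(c_1/\Cr{ksxo})u},
\]
but the second inequality requires $\|x\|_1 \geq u/\Cr{ksxo}$, whereas your hypothesis is $u \geq \Cr{ksxo}\|x\|_1$, i.e.\ $\|x\|_1 \leq u/\Cr{ksxo}$; the inequality points the wrong way except at $u=\Cr{ksxo}\|x\|_1$. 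This is not a mere slip: your whole scheme produces a bound of the form $\P(\text{bad event}) \leq C_1 e^{-c_1\|x\|_1}$ that is \emph{constant in $u$} once $x$ is fixed, so it cannot deliver decay $e^{-\Cr{3.12}u}$ for $u$ ranging over $[\Cr{ksxo}\|x\|_1,\infty)$, and Kesten's Theorem~3 does require the tail bound for all such $u$. To rescue the approach you would have to let the good event depend on $u$ (e.g.\ control $\{\hat a(0,x) \leq c'u\}$ via a Chernoff bound under (A1) and take the lattice-animal event at scale $\sim u$), or else upgrade the first-moment bound on $N_B$ to an exponential-moment bound in the spirit of Proposition~\ref{prop:mean_la}.

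The paper's proof takes a rather different route that sidesteps this issue entirely. It never introduces a good/bad decomposition of the environment. Instead it chooses $c:=-\log\E[e^{-(\omega(0)\wedge(4d/\gamma))}]$, uses Jensen to pass from $E_{Q_{\hat\omega}^{0,x}}[\#\mathcal A]$ to $a(0,x)+\log E^0\bigl[\exp\{\sum_{z\in\mathcal A}(c-\hat\omega(z))\}\bigr]$, applies an exponential Chebyshev bound with parameter $\gamma'$ (producing the crucial factor $e^{-c\gamma' u/\Cr{3.11}}$ that is genuinely exponential in $u$), and then splits via Schwarz. The factor involving $a(0,x)$ is bounded by $e^{C\|x\|_1}$ directly from a shortest-path estimate, while the other factor is handled by the annealed identity $\E[e^{c-\hat\omega(0)}]\leq 1$, which makes $\E\bigl[E^0[\exp\{\sum_{z\in\mathcal A}(c-\hat\omega(z))\}]^{2\gamma'}\bigr]\leq 1$. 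This yields $\P(\sum U_i\geq u)\leq e^{-c\gamma' u/\Cr{3.11}+C\|x\|_1}$, and taking $\Cr{ksxo}$ large enough absorbs the $C\|x\|_1$ term for all $u\geq\Cr{ksxo}\|x\|_1$.
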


Let us postpone the proofs of these lemmata.
Given Lemmata~\ref{lem:kes-szn1} and \ref{lem:kes-szn2}, we can apply (1.29) of \cite[Theorem~3]{Kes93}.
Then, there exist some constants $\Cl{3.13}$, $\Cl{3.14}$, $\Cl{3.15}$ such that for $t \leq \Cr{3.13}\|x\|_1$,
\begin{align}\label{eq:hat_cost}
	\P \biggl( \hat{a}(0,x)-\E[\hat{a}(0,x)] \geq \frac{t}{9} \|x\|_1^{1/2} \biggr)
	\leq \Cr{3.14} e^{-\Cr{3.15}t}.
\end{align}
Note that
\begin{align*}
	\hat{a}(0,x) \leq \biggl( \frac{4d}{\gamma}+\log(2d) \biggr) \|x\|_1 \log \|x\|_1,
\end{align*}
so that the left side of \eqref{eq:hat_cost} is equal to zero
for $t>9\{ 4d/\gamma+\log(2d) \}\|x\|_1^{1/2} \log\|x\|_1$.
Since $9\{ 4d/\gamma+\log(2d) \}\|x\|_1^{1/2} \log\|x\|_1<\Cr{3.13}\|x\|_1$ for all large $x$,
\eqref{eq:hat_cost} holds for all $t \geq 0$.
The theorem immediately follows from this and \eqref{eq:ccompare}.\qed

\begin{proof}[\bf Proof of Lemma~\ref{lem:kes-szn1}]
We can represent $\Delta_i$ as
\begin{align*}
	\Delta_i=\int_\Omega \bigl\{ \hat{a}(0,x, [\omega,\omega']_i )-\hat{a}(0,x, [\omega,\omega']_{i-1}) \bigr\}
	\,\P(d\omega').
\end{align*}
Thus, Schwarz's inequality shows
\begin{align}\label{eq:schwarz}
	\Delta_i^2
	\leq \int_\Omega \bigl\{ \hat{a}(0,x, [\omega,\omega']_i)
	-\hat{a}(0,x, [\omega,\omega']_{i-1}) \bigr\}^2 \,\P(d\omega' )
\end{align}
Lemma~\ref{lem:rankone} proves that
\begin{align}\label{eq:travel_rank}
\begin{split}
	&\bigl| \hat{a}(0,x, [\omega,\omega']_i)-\hat{a}(0,x, [\omega,\omega']_{i-1}) \bigr|\\
	&\leq \max_{s=\hat{\omega}_i,\hat{\omega}'_i} \Bigl[
		s+\bigl( 1-\min\{ e^{-s},P^0(H_2(0)<\infty) \} \bigr)^{-1} \Bigr].
\end{split}
\end{align}
Thanks to (A3) in $d=2$ and transience of the simple random walk in $d \geq 3$,
the last term is bounded from above by $\max\{ \hat{\omega}_i,\hat{\omega}'_i \}+\Cr{3.16}$
for some constant $\Cl{3.16}$.
Therefore,
\begin{align*}
	\Delta_i^2
	\leq \biggl( \frac{4d}{\gamma}\log\|x\|_1+\Cr{3.16} \biggr)^2 \vee \E[(\omega(0)+\Cr{3.16})^2],
\end{align*}
and \eqref{eq:difbd} holds for all large $x \in \Z^d$.

We next show \eqref{eq:marti}.
By \eqref{eq:schwarz}, $\E[\Delta_i^2|\mathcal{F}_{i-1}]$ is bounded by
\begin{align*}
	\int_\Omega \int_\Omega \bigl\{ \hat{a}(0,x,[\omega,\omega']_i)
	-\hat{a}(0,x,[\omega,\omega']_{i-1}) \bigr\}^2 \,\P(d\omega') \P(d\omega_i).
\end{align*}
Moreover, by symmetry we can restrict the domain of the integration above to those configurations with
$\omega'_i \leq \omega_i$.
Consequently, 
\begin{align*}
	&\E[\Delta_i^2|\mathcal{F}_{i-1}]\\
	&\leq 2\int_\Omega \int_\Omega \bigl\{ \hat{a}(0,x,[\omega,\omega']_i)
		-\hat{a}(0,x,[\omega,\omega']_{i-1}) \bigr\}^2 \1{\{ \omega'_i \leq \omega_i \}} \,\P(d\omega') \P(d\omega_i).
\end{align*}
Apply Lemma~\ref{lem:rankone} and \eqref{eq:travel_rank} again, and the integration in the right side is smaller than
or equal to
\begin{align*}
	&\E[(\omega (0) +\Cr{3.16})^2] \P \bigl(
		Q_{\hat{\omega}}^{0,x}(H(x) \leq H(x_i))<1/2 \big| \mathcal{F}_{i-1} \bigr)\\
	&+\E \Bigl[ \bigl\{ \log Q_{\hat{\omega}}^{0,x}(H(x) \leq H(x_i)) \bigr\}^2
		\1{\{ Q_{\hat{\omega}}^{0,x}(H(x) \leq H(x_i)) \geq 1/2 \}} \Big| \mathcal{F}_{i-1} \Bigr].
\end{align*}
Write $I_1$ and $I_2$ for the first and second terms  above, respectively.
Then,
\begin{align*}
	I_1
	&= \E\bigl[ (\omega (0) +\Cr{3.16})^2 \bigr]
		\P \bigl( Q_{\hat{\omega}}^{0,x}(H(x)>H(x_i))>1/2 \big| \mathcal{F}_{i-1} \bigr)\\
	&\leq 2\E\bigl[ (\omega (0) +\Cr{3.16})^2 \bigr]
		\E \bigl[ Q_{\hat{\omega}}^{0,x}(H(x)>H(x_i)) \big| \mathcal{F}_{i-1} \bigr].
\end{align*}
Using $(\log t)^2 \leq 1-t$ for $t \in [1/2,1]$, we have
\begin{align*}
	I_2 \leq \E \bigl[  Q_{\hat{\omega}}^{0,x}(H(x)>H(x_i)) \big| \mathcal{F}_{i-1} \bigr],
\end{align*}
and therefore
\begin{align*}
	I_1+I_2
	\leq (2\E\bigl[ (\omega (0) +\Cr{3.16})^2 \bigr] +1)
		\E \bigl[ Q_{\hat{\omega}}^{0,x}(H(x)>H(x_i)) \big| \mathcal{F}_{i-1} \bigr].
\end{align*}
With these observations, taking $\Cr{3.11}$ large enough, we get bound \eqref{eq:marti}.
\end{proof}

\begin{proof}[\bf Proof of Lemma~\ref{lem:kes-szn2}]
By the definition of $U_i$, we have
\begin{align}\label{eq:kesten_conc}
\begin{split}
	\P \Biggl( \sum_{i=1}^M U_i \geq u \Biggr)
	\leq \P \Bigl( E_{Q_{\hat{\omega}}^{0,x}}[\# \mathcal{A}] \geq \Cr{3.11}^{-1}u \Bigr),
\end{split}
\end{align}
where $\mathcal{A}:=\{ S_k; 0 \leq k<H(x) \}$.
Let $c:=-\log \E[e^{ -(\omega (0) \wedge (4d/\gamma))}]$.
We use Jensen's, Chebyshev's and Schwarz's inequalities to obtain that for $\gamma' \in (0,\gamma \wedge 1/2)$,
the right side of \eqref{eq:kesten_conc} is smaller than or equal to
\begin{align*}
	&\P \biggl( a(0,x)+\log E^0 \biggl[ \exp \biggl\{ \sum_{z \in \mathcal{A}} (c-\hat{\omega} (z)) \biggr\} \biggr]
		\geq c\Cr{3.11}^{-1}u \biggr)\\
	&\leq \exp \Bigl\{ -\frac{c\gamma'}{\Cr{3.11}}u \Bigr\}
		\E \Biggl[ e^{\gamma' a(0,x)}
		E^0 \biggl[ \exp \biggl\{ \sum_{z \in \mathcal{A}} (c-\hat{\omega} (z)) \biggr\} \biggr]^{\gamma'} \Biggr]\\
	&\leq \exp \Bigl\{ -\frac{c\gamma'}{\Cr{3.11}}u \Bigr\}
		\E \bigl[ e^{2\gamma' a(0,x)} \bigr]^{1/2}
		\E \Biggl[ E^0 \biggl[ \exp \biggl\{ \sum_{z \in \mathcal{A}} (c-\hat{\omega} (z)) \biggr\}
		\biggr]^{2\gamma'}\Biggr]^{1/2}.
\end{align*}
Notice that
\begin{align*}
	\E \bigl[ e^{2\gamma' a(0,x)} \bigr]^{1/2}
	\leq e^{\gamma' \|x\|_1\log (2d)} \E \bigl[ e^{2\gamma' \omega (0)} \bigr]^{\|x\|_1/2},
\end{align*}
and if $\|x\|_1 \geq e$, then
\begin{align*}
	\E \bigl[ e^{c-\hat{\omega} (0)} \bigr]
	\leq \E \bigl[e^{c-(\omega (0) \wedge (4d/\gamma))} \bigr]=1.
\end{align*}
Hence Jensen's inequality yields that
\begin{align*}
	\E \Biggl[ E^0 \biggl[ \exp \biggl\{ \sum_{z \in \mathcal{A}} (c-\hat{\omega} (z))
	\biggr\} \biggr]^{2\gamma'} \Biggr]^{1/2}
	&\leq \E \biggl[ E^0 \biggl[ \exp \biggl\{ \sum_{z \in \mathcal{A}} (c-\hat{\omega} (z)) \biggr\} \biggr]
		\biggr]^{\gamma'}\\
	&= E^0 \Biggl[ \prod_{z \in \mathcal{A}} \E[ \exp \{ c-\hat{\omega} (0) \} ] \Biggr]^{\gamma'} \leq 1.
\end{align*}
With these observations, we have
\begin{align*}
	\P \Biggl( \sum_{k=1}^M U_k \geq u \Biggr)
	\leq \exp \biggl\{ -\frac{c\gamma'}{\Cr{3.11}}u
		+\gamma' \|x\|_1\log (2d) +\frac{\|x\|_1}{2} \log \E \bigl[ e^{2\gamma' \omega (0)} \bigr] \biggr\}.
\end{align*}
Taking
\begin{align*}
	u_0
	:= \max \biggl\{ e^2C_{10}^2 (\log \|x\|_1)^2,
		\|x\|_1 \biggl( \frac{2\Cr{3.11}}{c} \log(2d)
		+\frac{\Cr{3.11}}{c\gamma'} \log \E \bigl[ e^{2\gamma' \omega (0)} \bigr] \biggr) \biggr\},
\end{align*}
one has for $u \geq u_0$,
\begin{align*}
	\P \Biggl( \sum_{k=1}^M U_k \geq u \Biggr)
	\leq \exp \Bigl\{ -\frac{c\gamma'}{2\Cr{3.11}}u \Bigr\}.
\end{align*}
This completes the proof.
\end{proof}

\section{The Gaussian concentration for the lower tail}\label{sect:gaussian}

In this section, we prove Theorem~\ref{thm:gaussian}.
To this end, we basically follow from the strategy taken in
\cite[Theorem~{1.1}]{DamKub14_arXiv}.
First, in Subsection~\ref{subsect:ent} we introduce the entropy and estimate it from above by
using the size of the range of the random walk, see Proposition~\ref{prop:ent_la} below.
Then we give a bound for this size under the weighted measure $Q_\omega^{0,x}$ as in Lemma~\ref{lem:rankone},
see Proposition~\ref{prop:mean_la} and Corollary~\ref{cor:moment_la} below.
Finally, we shall complete the proof of Theorem~\ref{thm:gaussian} in Subsection~\ref{subsect:pfg}.

\subsection{The entropy}\label{subsect:ent}
As mentioned above, our approach is based on \cite{DamKub14_arXiv} for the Gaussian concentration.
Accordingly, let us introduce the entropy $Ent(X)$ of a nonnegative random variable $X$ with $\E[X]<\infty$,
following the notation used in \cite{DamKub14_arXiv}:
\begin{align*}
	Ent(X):=\E[X\log X]-\E[X]\log \E[X].
\end{align*}
We have $Ent(X) \geq 0$ by Jensen's inequality.
If $\mu$ is the probability measure defined by $\mu(A)=\E[X \1{A}]/\E[X]$,
then the entropy of $X$ is the relative entropy $H(\mu|\P)$ of $Q$ with respect to $\P$ up to a normalizing constant multiple:
\begin{align*}
	\frac{Ent(X)}{\E[X]}=H(\mu|\P).
\end{align*}
This means that the following standard properties are inherited from the relative entropy:
\begin{align}\label{eq:ent_sup}
	Ent(X)=\sup \bigl\{ \E[XW]; \E[e^W] \leq 1 \bigr\}
\end{align}
and
\begin{align}\label{eq:ent_sum}
	Ent(X) \leq \sum_{y \in \Z^d} \E[Ent_y(X)],
\end{align}
where $Ent_y(X)$ is the entropy of $X$ considered only as a function of $\omega(y)$
(with all other configurations fixed).

The main object of this subsection is to obtain the following bound for the entropy,
which is the counterpart to Proposition~{2.5} of \cite{DamKub14_arXiv}.

\begin{prop}\label{prop:ent_la}
Assume (A2).
In addition, suppose that (A3) is valid if $d=2$.
There exists a constant $\Cl{4.2}$ such that
for $\lambda \leq 0$,
\begin{align}\label{eq:ent_la}
 Ent(e^{\lambda a(0,x)})
 \leq \Cr{4.2} \lambda^2 \E \Bigl[ e^{\lambda a(0,x)} \hat{E}_\omega^{0,x}[\# \mathcal{A}] \Bigr],
\end{align}
where $\mathcal{A}:=\{S_k;0 \leq k <H(x)\}$ and $\hat{E}_\omega^{0,x}$ is the expectation
with respect to the probability measure $Q_\omega^{0,x}$ as in Lemma~\ref{lem:rankone}.
\end{prop}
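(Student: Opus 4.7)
The plan is to prove a pointwise single-site bound $\E[Ent_y(g)] \leq C\lambda^2 \E[g\,q_y]$ for $g := e^{\lambda a(0,x,\omega)}$ and $q_y := Q_\omega^{0,x}(H(y) < H(x))$; summing over $y$ via tensorization \eqref{eq:ent_sum} and using $\sum_y q_y = \hat{E}_\omega^{0,x}[\#\mathcal{A}]$ then delivers \eqref{eq:ent_la}. The starting point is the standard two-point entropy identity
\begin{align*}
Ent_y(g) \leq \tfrac{1}{2}\,\E_y \E_{y'}\bigl[(g - g')(\log g - \log g')\bigr],
\end{align*}
where $g'$ denotes $g$ with $\omega(y)$ replaced by an independent copy $\omega'(y)$. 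Since $a(0,x,\cdot)$ is non-decreasing in $\omega(y)$ by Lemma~\ref{lem:rankone}, I would use $(\omega(y),\omega'(y))$-symmetry to restrict to $\{\omega(y) \leq \omega'(y)\}$ (doubling the bound) and deduce, via $e^t - 1 \leq t e^t$ applied to $t = |\lambda|(a(0,x,\omega') - a(0,x,\omega)) \geq 0$, the pointwise estimate $(g - g')(\log g - \log g') \leq \lambda^2\,g\,(a(0,x,\omega') - a(0,x,\omega))^2$.

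To bound the squared cost difference, I would invoke both inequalities in Lemma~\ref{lem:rankone}. Under (A3) for $d=2$ or transience for $d \geq 3$, the second expression there is bounded by $\omega'(y) - \omega(y) + C_*$ for a universal $C_*$, so
\begin{align*}
a(0,x,\omega') - a(0,x,\omega) \leq \min\bigl\{-\log(1 - q_y),\ \omega'(y) - \omega(y) + C_*\bigr\}.
\end{align*}
Splitting the event $\{\omega(y) \leq \omega'(y)\}$ according to $\{q_y \leq 1/2\}$ or $\{q_y > 1/2\}$ and using, respectively, $-\log(1-q_y) \leq 2q_y$ and $\1{\{q_y > 1/2\}} \leq 2q_y$, yields
\begin{align*}
(a(0,x,\omega') - a(0,x,\omega))^2 \leq 4 q_y + 2 q_y\,(\omega'(y) - \omega(y) + C_*)^2.
\end{align*}
Integrating out $\omega'(y)$ with the help of (A2) then produces
\begin{align*}
Ent_y(g) \leq C \lambda^2 \,\E_y\bigl[g\,q_y\,(1 + \omega(y)^2)\bigr].
\end{align*}

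The main obstacle is to remove the spurious factor $\omega(y)^2$. The key observation I would use is that $\omega(y) \mapsto g(\omega)\,q_y(\omega)$ is non-increasing with all other coordinates held fixed: monotonicity of $a$ in $\omega(y)$ gives this for $g$, and writing $e(0,x,\omega) = F_y(\omega) + G_y(\omega)$ where $G_y$ is the Feynman-Kac weight restricted to paths that reach $x$ before visiting $y$—hence independent of $\omega(y)$—one checks that $q_y = F_y/(F_y + G_y)$ is also non-increasing in $\omega(y)$. Since $\omega(y) \mapsto \omega(y)^2$ is non-decreasing on $[0,\infty)$, Chebyshev's correlation inequality for a single real variable gives
\begin{align*}
\E_y\bigl[g\,q_y\,\omega(y)^2\bigr] \leq \E[\omega(0)^2]\,\E_y\bigl[g\,q_y\bigr].
\end{align*}
Taking full expectation, summing over $y \in \Z^d$, and appealing to \eqref{eq:ent_sum} then completes the proof of \eqref{eq:ent_la}.
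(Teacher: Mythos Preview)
Your proof is correct and follows essentially the same route as the paper's: tensorize via \eqref{eq:ent_sum}, bound the single-site entropy by $\lambda^2$ times a squared cost difference (the paper packages this as Lemma~\ref{lem:ent_bound}, equivalent to your two-point inequality), control that difference with the two alternatives in Lemma~\ref{lem:rankone}, and split according to whether $q_y$ exceeds $1/2$. The one detour you take is unnecessary: the paper never sees an $\omega(y)^2$ factor because it bounds $\omega'(y)-\omega(y)+C_* \leq \omega'(y)+C_*$ at the outset (nonnegativity of the potential), so after integrating out $\omega'(y)$ via (A2) only a constant remains and the Chebyshev correlation step is not needed---though your monotonicity argument for $g\,q_y$ is correct and the step is valid.
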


To this end, we prepare some notation and lemmata.
For a given $x \in \Z^d$ we set $U:=a(0,x)$.
In addition, for $y \in \Z^d$ let $U_y$ be the variable $U$ in the configuration in which
the potential at $y$ is replaced by its independent copy $\omega'(y)$.
Write $\E_y$ and $\bar{\E}_y$ for averages only over the potential at $y$,
and over both the potential at $y$ and its independent copy, respectively.
The next lemma follows from the same strategy taken in Lemma~2.7 of \cite{DamKub14_arXiv}.
For the reader's convenience, we shall repeat the argument, since our model is different and the proof is not long.

\begin{lem}\label{lem:ent_bound}
We have for $\lambda \leq 0$ and $y \in \Z^d$,
\begin{align}\label{eq:ent_bound}
 Ent_y(e^{\lambda U})
 \leq \lambda^2 \bar{\E}_y \bigl[ e^{\lambda U}(U_y-U)^2_+ \bigr].
\end{align}
\end{lem}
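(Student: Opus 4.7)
The plan is to apply a standard ``duplication'' bound for the single-site entropy and then convert the resulting exponential difference into the desired $(U_y-U)_+^2$ factor via a one-line Taylor estimate. Throughout, I fix $\omega(z)$ for $z\neq y$; recall $U_y$ is $U$ evaluated with $\omega(y)$ replaced by an independent copy $\omega'(y)$, and $\bar{\E}_y$ averages over both $\omega(y)$ and $\omega'(y)$.

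First I would establish the asymmetric duplication inequality
\begin{align*}
Ent_y(X) \leq \bar{\E}_y\bigl[X(\log X-\log X_y)\bigr].
\end{align*}
This follows by unfolding $Ent_y(X)=\E_y[X\log X]-\E_y[X]\log\E_y[X]$, applying Jensen's bound $\E_y[\log X_y]\leq\log\E_y[X_y]$, and noting $\bar{\E}_y[X\log X_y]=\E_y[X]\,\E_y[\log X_y]$ by the independence of $X$ and $X_y$ under $\bar{\E}_y$. Since $Ent_y(X)=Ent_y(X_y)$ (same distribution), averaging with the analogous bound for $X_y$ yields the symmetric form
\begin{align*}
Ent_y(X) \leq \tfrac{1}{2}\,\bar{\E}_y\bigl[(X-X_y)(\log X-\log X_y)\bigr].
\end{align*}
Specialising to $X=e^{\lambda U}$, so that $\log X-\log X_y=\lambda(U-U_y)$, and invoking the swap identity
\begin{align*}
\bar{\E}_y\bigl[(e^{\lambda U}-e^{\lambda U_y})(U-U_y)_+\bigr]=-\bar{\E}_y\bigl[(e^{\lambda U}-e^{\lambda U_y})(U_y-U)_+\bigr]
\end{align*}
(obtained by interchanging $\omega(y)$ and $\omega'(y)$), the symmetric bound collapses to
\begin{align*}
Ent_y(e^{\lambda U}) \leq -\lambda\,\bar{\E}_y\bigl[(e^{\lambda U}-e^{\lambda U_y})(U_y-U)_+\bigr].
\end{align*}
For $\lambda\leq 0$ both factors in the integrand are nonnegative.

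Second, I would bound $e^{\lambda U}-e^{\lambda U_y}$ on the effective region $\{U_y\geq U\}$ by the elementary inequality $e^a-e^b\leq e^a(a-b)$ for $a\geq b$ (a restatement of $1-e^{-t}\leq t$), applied with $a=\lambda U$ and $b=\lambda U_y$; the hypothesis $\lambda\leq 0$ is precisely what guarantees $a\geq b$ there. This gives
\begin{align*}
e^{\lambda U}-e^{\lambda U_y} \leq e^{\lambda U}\,|\lambda|(U_y-U)_+,
\end{align*}
and substituting into the previous display produces
\begin{align*}
Ent_y(e^{\lambda U}) \leq -\lambda\cdot|\lambda|\,\bar{\E}_y\bigl[e^{\lambda U}(U_y-U)_+^2\bigr]=\lambda^2\,\bar{\E}_y\bigl[e^{\lambda U}(U_y-U)_+^2\bigr],
\end{align*}
which is \eqref{eq:ent_bound}.

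The only real bookkeeping point is extracting the one-sided $(U_y-U)_+^2$ cleanly from an otherwise symmetric quantity, which is handled by the swap argument in the first step. The sign hypothesis $\lambda\leq 0$ enters exactly once, in the Taylor step, and is consumed in producing the $|\lambda|$, so that $-\lambda\cdot|\lambda|=\lambda^2$ emerges cleanly. No property specific to $U=a(0,x)$ or to the underlying random walk is used here; those ingredients enter only at the next stage, when $\bar{\E}_y[e^{\lambda U}(U_y-U)_+^2]$ is controlled site-by-site via the rank-one perturbation formula (Lemma~\ref{lem:rankone}).
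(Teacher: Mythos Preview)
Your proof is correct and reaches the same conclusion via a slightly different organization. The paper starts from the tangent-line inequality for the convex function $f(t)=t\log t$ at the point $a=e^{\lambda U_y}$, obtaining $Ent_y(e^{\lambda U})\leq\bar{\E}_y[e^{\lambda U}g(\lambda(U_y-U))]$ with $g(t)=e^t-t-1$; it then splits $g(t)=g(t_+)+g(-t_-)$ and symmetrizes to reach $\bar{\E}_y[e^{\lambda U}h(\lambda(U_y-U)_+)]$ with $h(t)=t(e^t-1)$, and finishes with $h(t)\leq t^2$ for $t\leq 0$. You instead go through the symmetric covariance form $\tfrac12\bar{\E}_y[(X-X_y)(\log X-\log X_y)]$, obtained from Jensen applied to $\log$, and your swap identity collapses it to the same one-sided expression---indeed $-\lambda(e^{\lambda U}-e^{\lambda U_y})(U_y-U)_+=e^{\lambda U}h(\lambda(U_y-U)_+)$ identically---after which your inequality $1-e^{-t}\leq t$ is exactly the paper's $h(t)\leq t^2$ rewritten. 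Your route is a bit more streamlined and avoids introducing $g$; the paper's route is the standard modified-log-Sobolev template from the Boucheron--Lugosi--Massart framework.
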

\begin{proof}
Since $f(t):=t\log t$ is convex on $[0,\infty)$, one has
\begin{align*}
 f(\E_y[X]) \geq f'(a)(\E_y[X]-a)+f(a)
\end{align*}
for suitable $X$ and $a>0$.
This gives
\begin{align*}
 \E_y[f(X)]-f(\E_y[X])
 \leq \E_y[f(X)]-f(a)-\E_y[(X-a)f'(a)].
\end{align*}
We now apply this with $X=e^{\lambda U}$ and $a=e^{\lambda U_y}$,
and integrate over the independent copy of the potential at $y$ to obtain
\begin{align*}
 &\E_y[f(e^{\lambda U})]-f(\E_y[e^{\lambda U}])\\
 &\leq \bar{\E}_y \bigl[ f(e^{\lambda U})-f(e^{\lambda U_y})
       -(e^{\lambda U}-e^{\lambda U_y})f'(e^{\lambda U_y}) \bigr].
\end{align*}
This, together with the definition of $f$, shows
\begin{align*}
 Ent_y(e^{\lambda U})
 \leq \bar{\E}_y \bigl[ e^{\lambda U}g(\lambda (U_y-U)) \bigr],
\end{align*}
where $g(t):=e^t-t-1$.
Since $g(t)=g(t_+)+g(-t_-)$, the right side is equal to
\begin{align*}
	\bar{\E}_y \bigl[ e^{\lambda U}g(\lambda (U_y-U)_+) \bigr]
	+\bar{\E}_y \bigl[ e^{\lambda U_y} e^{\lambda (U-U_y)} g(-\lambda (U_y-U)_-) \bigr].
\end{align*}
By symmetry, the second term is equal to
\begin{align*}
	\bar{\E}_y \bigl[ e^{\lambda U} e^{\lambda (U_y-U)_+} g(-\lambda (U_y-U)_+) \bigr].
\end{align*}
With these observations, setting $h(t):=t(e^t-1)$, one has
\begin{align*}
	Ent_y(e^{\lambda U})
	\leq \bar{\E}_y \bigl[ e^{\lambda U} h(\lambda (U_y-U)_+) \bigr],
\end{align*}
and \eqref{eq:ent_bound} follows from the fact that $h(t) \leq t^2$ for $t \leq 0$.
\end{proof}

After the preparation above, let us prove Proposition~\ref{prop:ent_la}.

\begin{proof}[\bf Proof of Proposition~\ref{prop:ent_la}]
Combining Lemmata~\ref{lem:rankone} and \ref{lem:ent_bound}, one has
\begin{align*}
	Ent_y(e^{\lambda U})
	\leq \lambda^2 \bar{\E}_y \Bigl[ e^{\lambda U}
		\Bigl( \min \Bigl\{ -\log Q_{\omega}^{0,x}(H(x) \leq H(y)), \omega'(y) +\Cr{4.3} \Bigr\} \Bigr)^2 \Bigr]
\end{align*}
for some constant $\Cl{4.3}$.
The last expectation is bounded from above by
\begin{align*}
 I_1+I_2
 &:= \bar{\E}_y \Bigl[ e^{\lambda U}
     (\omega'(y) +\Cr{4.3})^2 \1{\{ Q_{\omega}^{0,x}(H(x) \leq H(y))<1/2 \}} \Bigr]\\
 &\quad \ 
     +\E_y \Bigl[ e^{\lambda U}
      \bigl( \log Q_{\omega}^{0,x}(H(x) \leq H(y)) \bigr)^2
      \1{\{ Q_{\omega}^{0,x}(H(x) \leq H(y)) \geq 1/2 \}} \Bigr].
\end{align*}
We have
\begin{align*}
 I_1
 &= \bar{\E}_y \Bigl[ e^{\lambda U}
    (\omega'(y) +\Cr{4.3})^2 \1{\{ Q_{\omega}^{0,x}(H(x)>H(y))>1/2 \}} \Bigr]\\
 &\leq 2 \E[(\omega(0) +\Cr{4.3})^2]
    \E_y \bigl[ e^{\lambda U} Q_{\omega}^{0,x}(H(x)>H(y)) \bigr].
\end{align*}
Since $(\log t)^2 \leq 1-t$ for $1/2 \leq t \leq 1$,
it holds that
\begin{align*}
 I_2
 \leq \E_y \bigl[ e^{\lambda U} Q_{\omega}^{0,x}(H(x)>H(y)) \bigr].
\end{align*}
Therefore,
\begin{align*}
 I_1+I_2
 \leq (2 \E[(\omega(0) +\Cr{4.3})^2]+1)
      \E_y \bigl[ e^{\lambda U} Q_{\omega}^{0,x}(H(x)>H(y)) \bigr].
\end{align*}
This combined with \eqref{eq:ent_sum} yields
\begin{align*}
 Ent(e^{\lambda U})
 &\leq \sum_{y \in \Z^d} \lambda^2 (2 \E[(\omega(0) +\Cr{4.3})^2]+1)
       \E \Bigl[ \E_y \bigl[ e^{\lambda U} Q_{\omega}^{0,x}(H(x)>H(y)) \bigr] \Bigr]\\
 &= (2 \E[(\omega(0) +\Cr{4.3})^2]+1) \lambda^2
    \E \Bigl[ e^{\lambda U} \hat{E}_{\omega}^{0,x}[\# \mathcal{A}] \Bigr],
\end{align*}
which proves \eqref{eq:ent_la}.
\end{proof}

\subsection{The size of random lattice animals}\label{subsect:la}

First of all, let us prepare some notation to mention the main object of this subsection.
Given a potential $\omega \in \Omega$, we say that a subset $A$ of $\Z^d$ is occupied
if there exists $z \in A$ such that $\omega(z) \geq \kappa$,
otherwise $A$ is empty.
Fix $\kappa>0$ satisfying $\P(\omega(0) \geq \kappa)>0$.
For sufficiently large $l \in 2\N$, we consider the cubes
\begin{align*}
 C(q):=\Bigl( lq+\Bigl[ -\frac{l}{2},\frac{l}{2} \Bigr)^d \Bigr) \cap \Z^d,\qquad q \in \Z^d,
\end{align*}
and set
\begin{align*}
	\mathcal{C}_x
	:= \{ q \in \Z^d; \exists q' \in \Z^d \text{ with } \| q'-q \|_\infty \leq 1 \text{ such that } x \in C(q') \}.
\end{align*}
We now define the successive times of travel of $(S_k)_{k=0}^\infty$
at $\ell^\infty$-distance $3l/4$ as follows:
\begin{align*}
 &\tau_0 :=0,\\
 &\tau_{i+1} :=\inf\{ k>\tau_i; \| S_k-S_{\tau_i} \|_\infty \geq 3l/4\},\qquad i \geq 0.
\end{align*}
Observe that $S_{\tau_i}$ and $S_{\tau_{i+1}}$ lie in neighboring cubes $C(q)$
for the neighboring relation $\| q-q' \|_\infty \leq 1$.
Furthermore, during the time interval $[\tau_i,\tau_{i+1}]$,
the walk cannot visit more than $3^d$ distinct cubes $C(q)$.
Therefore, if we denote
\begin{align*}
 \tilde{\mathcal{A}}:=\{ q \in \Z^d; \exists i \geq 0 \text{ such that }
 S_{\tau_i \wedge H(x)} \in C(q) \},
\end{align*}
then $\tilde{\mathcal{A}}$ is $Q_\omega^{0,x} \hyphen \as$ an $\ell^\infty$-lattice animal,
i.e., a finite $\ell^\infty$-connected set of $\Z^d$ with  the adjacency relation $\|v_1-v_2\|_\infty=1$, $v_1,v_2 \in \Z^d$.
The definition of $\tilde{\mathcal{A}}$ implies
\begin{align}\label{eq:animals}
 \# \mathcal{A}
 \leq (3l)^d \# \tilde{\mathcal{A}} \qquad Q_\omega^{0,x} \hyphen \as
\end{align}

The following proposition is the main object of this subsection.

\begin{prop}\label{prop:mean_la}
Let $\Omega':=\{ \omega \in \Omega;(-l/8,l/8)^d \text{ is occupied} \}$ and let
\begin{align*}
 \chi
 := \sup_{\substack{\| z \|_\infty \leq l/2\\ \omega \in \Omega'}}
    E^z \biggl[ \exp \biggl\{ -\sum_{k=0}^{\tau_1-1}\omega (S_k) \biggr\} \biggr].
\end{align*}
Then, the following hold:
\begin{enumerate}
 \item
  We have $\chi \in (0,1)$.
 \item
  There exist constants $\Cl{4.4}$, $\Cl{4.5}$, $\Cl{4.6}$, $\Cl{4.7}$ such that
  on an event $A_n$ with $\P(A_n^c) \leq \Cr{4.4}e^{-\Cr{4.5}n}$, for all $x \in \Z^d$,
  \begin{align*}
   \hat{E}_\omega^{0,x}\bigl[ \exp \{ \Cr{4.6}\# \mathcal{A} \} \bigr]
   \leq \Cr{4.7}\chi^{-n} e(0,x)^{-1}.
  \end{align*}
\end{enumerate}
\end{prop}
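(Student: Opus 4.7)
The plan is to prove the two parts separately. For part (1), $\chi>0$ is immediate by taking any bounded configuration in $\Omega'$. For the harder bound $\chi<1$, I would fix $z$ with $\|z\|_\infty\leq l/2$ and $\omega\in\Omega'$, and pick $z^*\in(-l/8,l/8)^d$ with $\omega(z^*)\geq\kappa$. Since
\begin{align*}
\|z-z^*\|_\infty\leq \|z\|_\infty+\|z^*\|_\infty\leq l/2+l/8<3l/4,
\end{align*}
the simple random walk from $z$ reaches $z^*$ strictly before time $\tau_1$ with some probability $p_l>0$ depending only on $l$ and $d$; on that event $\sum_{k=0}^{\tau_1-1}\omega(S_k)\geq\kappa$, so $E^z[\exp\{-\sum_{k=0}^{\tau_1-1}\omega(S_k)\}]\leq 1-p_l(1-e^{-\kappa})<1$ uniformly in $(z,\omega)$.

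For part (2), I would call $q\in\Z^d$ good if $lq+(-l/8,l/8)^d$ is occupied, and let $A_n$ be the event that every $\ell^\infty$-lattice animal $\Gamma\ni 0$ with $\#\Gamma\geq n$ contains at least $\#\Gamma/2$ good indices. Choosing $l$ large enough that $p:=\P(0\text{ is good})$ is close to $1$, a union bound combining the standard exponential count of $\ell^\infty$-lattice animals of size $k$ with a binomial large-deviation estimate gives $\P(A_n^c)\leq\Cr{4.4}e^{-\Cr{4.5}n}$ in one shot.

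The main probabilistic input will be the supermartingale bound
\begin{align*}
E^0\Bigl[e^{-\sum_{k=0}^{H(x)-1}\omega(S_k)}\chi^{-\sum_{i\geq 0}Y_i}\1{\{H(x)<\infty\}}\Bigr]\leq 1,
\end{align*}
where $Y_i:=\1{\{\tau_i<H(x),\,q(S_{\tau_i})\text{ good}\}}$. With $M_j:=\exp\bigl\{-\sum_{k=0}^{\tau_j\wedge H(x)-1}\omega(S_k)\bigr\}\chi^{-\sum_{i<j}Y_i}$ and $M_0=1$, the strong Markov property at $\tau_j\wedge H(x)$ together with the very definition of $\chi$ forces $E[M_{j+1}\mid\mathcal{F}_{\tau_j\wedge H(x)}]\leq M_j$; since $M_j$ stabilizes on $\{H(x)<\infty\}$, Fatou's lemma delivers the claim. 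A key geometric observation is that every $q\in\tilde{\mathcal{A}}$ with $q\neq q(x)$ must be visited at some time $\tau_i<H(x)$ (otherwise $S_{\tau_i\wedge H(x)}=x\not\in C(q)$), so on $A_n\cap\{\#\tilde{\mathcal{A}}>n\}$ one has $\sum_iY_i\geq\#\tilde{\mathcal{A}}/2-1$.

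Finally, combining $\#\mathcal{A}\leq(3l)^d\#\tilde{\mathcal{A}}$ with the definition of $\hat{E}_\omega^{0,x}$, I would split $E^0[\exp\{c(3l)^d\#\tilde{\mathcal{A}}\}e^{-\sum\omega(S_k)}\1{\{H(x)<\infty\}}]$ along $\{\#\tilde{\mathcal{A}}\leq n\}$ versus $\{\#\tilde{\mathcal{A}}>n\}\cap A_n$, choosing $c=\Cr{4.6}$ small enough that $e^{c(3l)^d}\leq\chi^{-1/2}$. On the first event the exponential is at most $\chi^{-n/2}$ and the remaining factor at most $e(0,x)$; on the second event $\exp\{c(3l)^d\#\tilde{\mathcal{A}}\}\leq\chi^{-\#\tilde{\mathcal{A}}/2}\leq\chi^{-1}\chi^{-\sum_iY_i}$, and the supermartingale estimate converts this to a constant. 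Using $e(0,x)\leq 1$ and $\chi^{-n}\geq 1$ absorbs both contributions into $\Cr{4.7}\chi^{-n}e(0,x)^{-1}$. The main delicate point will be a clean Fatou/optional-stopping justification for $M_j$, whose factor $\chi^{-\sum Y_i}$ is unbounded; once the geometric bookkeeping linking the good cubes and $\tilde{\mathcal{A}}$ is carefully set up, the remaining steps are routine.
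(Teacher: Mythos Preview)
Your outline follows the paper's proof almost exactly: the same event $A_n$, the same supermartingale device, and the same splitting on $\{\#\tilde{\mathcal{A}}\leq n\}$ versus $\{\#\tilde{\mathcal{A}}>n\}$. The one place the proposal slips is the supermartingale step itself. With your definition $Y_j=\1{\{\tau_j<H(x),\,q(S_{\tau_j})\text{ good}\}}$, the claim $E[M_{j+1}\mid\mathcal{F}_{\tau_j\wedge H(x)}]\leq M_j$ can fail: if $\tau_j<H(x)$ and $q(S_{\tau_j})$ is good but lies in a cube neighboring $q(x)$, then under $P^{S_{\tau_j}}$ the walk may hit $x$ strictly before $\tau_1$; the truncated sum $\sum_{k=0}^{\tau_1\wedge H(x)-1}\omega(S_k)$ is then shorter than $\sum_{k=0}^{\tau_1-1}\omega(S_k)$, and the definition of $\chi$ gives no control on $E^{S_{\tau_j}}\bigl[\exp\{-\sum_{k=0}^{\tau_1\wedge H(x)-1}\omega(S_k)\}\bigr]$. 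The paper fixes this by letting the occupancy indicator $Oc(z)$ vanish whenever the cube of $z$ lies in $\mathcal{C}_x$, the $3^d$ cubes $q$ with $\|q-q(x)\|_\infty\leq 1$; for $q(S_{\tau_j})\notin\mathcal{C}_x$ one has $\|S_{\tau_j}-x\|_\infty\geq l>3l/4$, hence $\tau_1<H(x)$ almost surely and the $\chi$-bound applies cleanly. This only replaces your ``$-1$'' in $\sum_iY_i\geq\#\tilde{\mathcal{A}}/2-1$ by ``$-3^d$'' and is absorbed into the final constant, so once you make this correction the rest of your argument goes through unchanged.
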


This proposition is the counterpart to \cite[Theorem~{1.3}]{Szn95}
and it has already been mentioned in \cite[Lemma~2]{Le13_arXiv} with only a comment on the proof.
(For the annealed setting, the similar result was obtained by Kosygina--Mountford~\cite{KosMou12}.)
However, we have to review its proof more carefully to find the event $A_n$.
We will give the detailed proof of Proposition~\ref{prop:mean_la} to the end of this subsection.

The next corollary is an immediate consequence of Proposition~\ref{prop:mean_la}.

\begin{cor}\label{cor:moment_la}
Let $\Cl{4.8}:=(2/\Cr{4.6})(\log\chi^{-1}+1)$ and define for $x \in \Z^d$,
\begin{align*}
 Y_x
 := \hat{E}_\omega^{0,x}[\#\mathcal{A}]
    \1{\{ \Cr{4.8}a(0,x)<\hat{E}_\omega^{0,x}[\#\mathcal{A}] \}}.
\end{align*}
Then, there exist constants $\Cl{4.9}$, $\Cl{4.10}$ such that for all large $x \in \Z^d$,
\begin{align*}
 \E[e^{\Cr{4.9} Y_x}] \leq \Cr{4.10}.
\end{align*}
\end{cor}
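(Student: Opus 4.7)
The plan is to prove the exponential tail bound $\P(Y_x > t) \leq c\,e^{-\lambda t}$ uniformly in $x$ (for $t$ above some threshold), and then obtain the exponential moment by integrating the tail. The main input is Proposition~\ref{prop:mean_la}(ii); applying Jensen's inequality inside $\hat{E}_\omega^{0,x}$ converts it into the first-moment bound, valid on $A_n$ for every $x \in \Z^d$:
\begin{align*}
	\hat{E}_\omega^{0,x}[\#\mathcal{A}]
	\leq \frac{1}{\Cr{4.6}}\bigl(\log\Cr{4.7} + n\log\chi^{-1} + a(0,x)\bigr).
\end{align*}

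The decisive step is to use the indicator in the definition of $Y_x$ to absorb the $a(0,x)$ term back into the left side. On $\{Y_x > t\}$ one has $\hat{E}_\omega^{0,x}[\#\mathcal{A}] > t$ and $a(0,x) < \hat{E}_\omega^{0,x}[\#\mathcal{A}]/\Cr{4.8}$. Substituting the latter into the Jensen bound and invoking the calibration $\Cr{4.6}\Cr{4.8} = 2(\log\chi^{-1}+1)$, the coefficient of $\hat{E}_\omega^{0,x}[\#\mathcal{A}]$ appearing on the right becomes $1/(2\log\chi^{-1}+2) < 1$; setting $\delta := (2\log\chi^{-1}+1)/(2\log\chi^{-1}+2) \in (1/2,1)$, one can solve for $\hat{E}_\omega^{0,x}[\#\mathcal{A}]$ and conclude that on $A_n \cap \{Y_x > t\}$,
\begin{align*}
	t < \hat{E}_\omega^{0,x}[\#\mathcal{A}]
	\leq \frac{1}{\Cr{4.6}\delta}\bigl(\log\Cr{4.7} + n\log\chi^{-1}\bigr).
\end{align*}
Contrapositively, with $n(t) := \lfloor(\Cr{4.6}\delta\,t - \log\Cr{4.7})/\log\chi^{-1}\rfloor$, this gives $\{Y_x > t\} \subset A_{n(t)}^c$ whenever $n(t) \geq 1$, whence Proposition~\ref{prop:mean_la}(ii) yields $\P(Y_x > t) \leq \Cr{4.4}e^{-\Cr{4.5}n(t)} \leq c\,e^{-\lambda t}$ with $\lambda := \Cr{4.5}\Cr{4.6}\delta/\log\chi^{-1}$, uniformly in $x$ and for all $t$ above the threshold $t_0$ at which $n(t) \geq 1$.

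To conclude, fix any $\Cr{4.9} \in (0,\lambda)$ and apply the tail-integration identity
\begin{align*}
	\E[e^{\Cr{4.9}Y_x}]
	= 1 + \int_0^\infty \Cr{4.9}\,e^{\Cr{4.9}t}\,\P(Y_x > t)\,dt.
\end{align*}
Splitting the integral at $t_0$, the piece over $[0,t_0]$ contributes at most $e^{\Cr{4.9}t_0}$, while the tail piece converges because $\Cr{4.9} < \lambda$, yielding a constant $\Cr{4.10}$ independent of $x$.

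The hard part will be the middle step: verifying that the specific value $\Cr{4.8} = (2/\Cr{4.6})(\log\chi^{-1}+1)$ is calibrated sharply enough to make the self-bounding inequality for $\hat{E}_\omega^{0,x}[\#\mathcal{A}]$ contractive, i.e., to guarantee $\delta < 1$. Without this sharpness the $a(0,x)$ term could not be absorbed and only a trivial bound (or a bound depending on $\|x\|_1$ and hence useless for the entropy method of Subsection~\ref{subsect:ent}) would be available.
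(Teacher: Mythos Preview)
Your proof is correct and follows essentially the same approach as the paper: apply Jensen's inequality to Proposition~\ref{prop:mean_la}(ii), use the indicator in $Y_x$ to absorb the $a(0,x)$ term into a self-bounding inequality for $\hat{E}_\omega^{0,x}[\#\mathcal{A}]$ (with the calibration of $\Cr{4.8}$ ensuring contractivity), and then integrate the resulting exponential tail. The only tactical difference is that the paper couples the parameter $n$ to the threshold via $n=\lceil t/\Cr{4.8}\rceil$ and additionally uses $n \leq \hat{E}_\omega^{0,x}[\#\mathcal{A}]$ to bound the $\chi^{-n}$ term, obtaining an absolute bound $\hat{E}_\omega^{0,x}[\#\mathcal{A}] \leq (2/\Cr{4.6})\log\Cr{4.7}$ that contradicts $\hat{E}_\omega^{0,x}[\#\mathcal{A}] \geq \|x\|_1$ for large $x$; your version instead keeps $n$ free and optimizes it afterward, which is slightly cleaner and in fact yields the tail bound for all $x$ rather than only large $x$.
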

\begin{proof}
From Proposition~\ref{prop:mean_la},
\begin{align*}
 \P( Y_x \geq n)
 &\leq \Cr{4.4}e^{-(\Cr{4.5}/\Cr{4.8})n}\\
 &\quad
      +\P \Bigl( A_{\lceil n/\Cr{4.8} \rceil} \cap \Bigl\{ \hat{E}_\omega^{0,x}[\#\mathcal{A}] \geq n,\,
       \Cr{4.8}a(0,x)<\hat{E}_\omega^{0,x}[\#\mathcal{A}] \Bigr\} \Bigr),
\end{align*}
On the event appearing in the last probability,
\begin{align*}
 \Cr{4.6} \hat{E}_\omega^{0,x}[\#\mathcal{A}]
 &\leq \log \hat{E}_\omega^{0,x}[\exp \{ \Cr{4.6} \#\mathcal{A} \}]\\
 &\leq \log \Cr{4.7} +\frac{1}{\Cr{4.8}} (\log \chi^{-1}+1) \hat{E}_\omega^{0,x}[\#\mathcal{A}].
\end{align*}
By the choice of $\Cr{4.8}$, one obtains
\begin{align*}
 \Cr{4.6} \hat{E}_\omega^{0,x}[\#\mathcal{A}]
 \leq \log \Cr{4.7}+\frac{\Cr{4.6}}{2} \hat{E}_\omega^{0,x}[\#\mathcal{A}]
\end{align*}
or
\begin{align*}
 \hat{E}_\omega^{0,x}[\#\mathcal{A}] \leq \frac{2}{\Cr{4.6}} \log \Cr{4.7}.
\end{align*}
However, since $\hat{E}_\omega^{0,x}[\#\mathcal{A}] \geq \| x \|_1$,
the inequality fails to hold for all $x \in \Z^d$ with $\| x \|_1>(2/\Cr{4.6}) \log \Cr{4.7}$.
This means that $ \P( Y_x \geq n) \leq \Cr{4.4}e^{-(\Cr{4.5}/\Cr{4.8})n}$ for all large $x \in \Z^d$.
Putting $\Cr{4.9}:=\Cr{4.5}/(2\Cr{4.8})$, one has for all large $x \in \Z^d$,
\begin{align*}
 \E[e^{\Cr{4.9} Y_x}]
 \leq \int_0^\infty \P \bigl( Y_x \geq \Cr{4.9}^{-1}\log u \bigr) \,du
 \leq 1+\Cr{4.4}e^{\Cr{4.5}/\Cr{4.8}},
\end{align*}
and the proof is complete.
\end{proof}

\begin{proof}[\bf Proof of Proposition~\ref{prop:mean_la}]
(1) See page~7 of \cite{Le13_arXiv} for the upper bound.
Let us only prove the lower bound.
If $l \in 2\N$ is sufficiently large, then we can find $\omega_0 \in \Omega'$.
Letting $r$ be a path on $\Z^d$ from $0$ to a site outside $(-3l/4,3l/4)^d$, one has
\begin{align*}
 \chi
 \geq \exp \biggl\{ -\sum_{z \in r}\omega_0 (z) \biggr\} \Bigl( \frac{1}{2d} \Bigr)^{\# r}>0.
\end{align*}
(2) Let $A_n$ be the event that for any $\ell^\infty$-lattice animal $\Gamma$ containing $0$
of the size bigger than $n$,
\begin{align*}
 \sum_{q \in \Gamma} \1{\{ \text{$lq+(-l/8,l/8)^d$ is occupied} \}}
 \geq \frac{\# \Gamma}{2}.
\end{align*}
By the same strategy as in Lemma~\ref{lem:restrict}, we have for some constants $\Cr{4.4}$ and $\Cr{4.5}$,
\begin{align*}
 \P (A_n^c) \leq \Cr{4.4}e^{-\Cr{4.5}n}.
\end{align*}

For $z \in \Z^d$, define $Oc(z):=1$ if $q \in \Z^d$ with $z \in C(q)$ is not in $\mathcal{C}_x$
and $lq+(-l/8,l/8)^d$ is occupied, otherwise $Oc(z):=0$.
Furthermore, let us introduce
\begin{align*}
 &M_0:=1,\\
 &M_m:=\prod_{i=0}^{m-1} \chi^{-Oc(S_{\tau_i})\1{\{ \tau_{i+1}<H(x) \}}}
       \exp \Biggl\{ -\sum_{k=0}^{\tau_m \wedge H(x)-1}\omega(S_k) \Biggr\}, \qquad m \geq 1.
\end{align*}
We will prove that $(M_m)_{m=0}^\infty$ is an $(\mathcal{F}_{\tau_m})_{m=0}^\infty$-supermartingale
under $P^0$, where $\mathcal{F}_{\tau_m}$ is the $\sigma$-field associated to the stopping time $\tau_m$.
For the proof, we use the strong Markov property to obtain
\begin{align*}
 &E^0[M_{m+1}|\mathcal{F}_{\tau_m}]\\
 &= M_m \Biggl( \1{\{ \tau_m \geq H(x) \}}\\
 &\quad
    +\1{\{ \tau_m <H(x) \}} E^{S_{\tau_m}} \Biggl[ \chi^{-Oc(S_0)\1{\{ \tau_1 <H(x) \}}}
    \exp \Biggl\{ -\sum_{k=0}^{\tau_1 \wedge H(x)-1}\omega(S_k) \Biggr\}
    \Biggr] \Biggr).
\end{align*}
Observe that when $S_{\tau_m} \in \bigcup_{q \in \mathcal{C}_x} C(q)$,
$Oc(S_0)=0$ holds $P^{S_{\tau_m}} \hyphen \as$ and
\begin{align*}
 E^{S_{\tau_m}} \Biggl[ \chi^{-Oc(S_0)\1{\{ \tau_1<H(x) \}}}
    \exp \Biggl\{ -\sum_{k=0}^{\tau_1 \wedge H(x)-1}\omega(S_k) \Biggr\} \Biggr]
 \leq 1.
\end{align*}
On the other hand, when $S_{\tau_m} \not\in \bigcup_{q \in \mathcal{C}_x} C(q)$,
we have $\tau_1<H(x)$ $P^{S_{\tau_m}} \hyphen \as$, so that
\begin{align*}
 &E^{S_{\tau_m}} \Biggl[ \chi^{-Oc(S_0)\1{\{ \tau_1<H(x) \}}}
     \exp \Biggl\{ -\sum_{k=0}^{\tau_1 \wedge H(x)-1}\omega(S_k) \Biggr\} \Biggr]\\
 &= E^{S_{\tau_m}} \biggl[ \chi^{-Oc(S_0)}
    \exp \biggl\{ -\sum_{k=0}^{\tau_1-1}\omega(S_k) \biggr\} \biggr].
\end{align*}
If $lq+(-l/8,l/8)^d$ with $S_{\tau_m} \in C(q)$ is empty, then this is equal to
\begin{align*}
 E^{S_{\tau_m}} \biggl[ \exp \biggl\{ -\sum_{k=0}^{\tau_1-1}\omega(S_k) \biggr\} \biggr]
 \leq 1.
\end{align*}
If $lq+(-l/8,l/8)^d$ with $S_{\tau_m} \in C(q)$ is occupied, then
$\| S_{\tau_m}-lq \|_\infty \leq l/2$ and $(-l/8,l/8)^d$ is occupied
in the configuration $\omega(\cdot+lq)$.
Therefore the definition of $\chi$ implies
\begin{align*}
 E^{S_{\tau_m}} \biggl[ \chi^{-Oc(S_0)}
 \exp \biggl\{ -\sum_{k=0}^{\tau_1-1}\omega(S_k) \biggr\} \biggr]
 &= \chi^{-1} E^{S_{\tau_m}-lq} \biggl[ \exp \biggl\{ -\sum_{k=0}^{\tau_1-1}\omega(S_k+lq) \biggr\} \biggr]\\
 &\leq 1.
\end{align*}
With these observations,
\begin{align*}
 E^0[M_{m+1}|\mathcal{F}_{\tau_m}]
 \leq M_m \bigl( \1{\{ \tau_m \geq H(y) \}} +\1{\{ \tau_m <H(y) \}} \bigr)
 = M_m,
\end{align*}
which proves that $(M_m)_{m=0}^\infty$ is an $(\mathcal{F}_{\tau_m})_{m=0}^\infty$-supermartingale
under $P^0$.

Using Fatou's lemma and the martingale convergence theorem, we can see that $E^0[M_\infty] \leq 1$.
Fix $\omega \in A_n$.
Then, $Q_\omega^{0,x} \hyphen \as$ either $\# \tilde{\mathcal{A}} \leq n$ or
\begin{align*}
 \sum_{q \in \tilde{\mathcal{A}}}\1{\{ \text{$lq+(-l/8,l/8)^d$ is occupied} \}}
 \geq \frac{\#\tilde{\mathcal{A}}}{2}.
\end{align*}
Hence, by \eqref{eq:animals},
\begin{align*}
 &\hat{E}_\omega^{0,x} \Bigl[ \exp \Bigl\{ \frac{1}{2(3l)^d}
  (\log \chi^{-1}) \#\mathcal{A} \Bigr\} \Bigr]\\
 &\leq \hat{E}_\omega^{0,x} \Bigl[ \exp \Bigl\{ \half (\log \chi^{-1}) \# \tilde{\mathcal{A}} \Bigr\} \Bigr]\\
 &\leq \chi^{-n/2}e(0,x)^{-1}
       +\hat{E}_\omega^{0,x} \Biggl[ \exp \Biggl\{ (\log \chi^{-1})
        \sum_{q \in \tilde{\mathcal{A}}}\1{\{ \text{$lq+(-l/8,l/8)^d$ is occupied} \}}
        \Biggr\} \Biggr].
\end{align*}
Note that
\begin{align*}
 \sum_{q \in \tilde{\mathcal{A}}}\1{\{ \text{$lq+(-l/8,l/8)^d$ is occupied} \}}
 \leq \sum_{i=0}^\infty Oc(S_{\tau_i}) \1{\{ \tau_{i+1}<H(x) \}} +\#\mathcal{C}_x,
\end{align*}
and thus
\begin{align*}
 \hat{E}_\omega^{0,x} \Biggl[ \exp \Biggl\{ \log \chi^{-1}
 \sum_{q \in \tilde{\mathcal{A}}}\1{\{ \text{$lq+(-l/8,l/8)^d$ is occupied} \}}
 \Biggr\} \Biggr]
 &\leq E^0[M_\infty] \chi^{-\#\mathcal{C}_x}e(0,x)^{-1}\\
 &\leq \chi^{-\#\mathcal{C}_x}e(0,x)^{-1}.
\end{align*}
This implies
\begin{align*}
 \hat{E}_\omega^{0,x} \Bigl[ \exp \Bigl\{ \frac{1}{2(3l)^d}
 \log \chi^{-1} \#\mathcal{A} \Bigr\} \Bigr]
 \leq (\chi^{-n/2}+\chi^{-\#\mathcal{C}_x})e(0,x)^{-1}.
\end{align*}
Since $\#\mathcal{C}_x=3^d$ and $\chi \in (0,1)$, the proof is complete. 
\end{proof}

\subsection{Proof of Theorem~\ref{thm:gaussian}}\label{subsect:pfg}

For the proof, we follow the strategy taken in Section~3 of \cite{DamKub14_arXiv}.
Define for $\lambda \leq 0$,
\begin{align*}
 \psi(\lambda)
 := \log \E[e^{\lambda(a(0,x)-\E[a(0,x)])}]
 = \log \E[e^{\lambda a(0,x)}]-\lambda \E[a(0,x)].
\end{align*}
Note that $\psi(\lambda)$ is nonnegative by Jensen's inequality.

The following proposition plays a key role in the proof of Theorem~\ref{thm:gaussian}.

\begin{prop}\label{prop:herbst}
Assume (A2).
In addition, suppose that (A3) is valid if $d=2$.
There exists constants $\Cl{4.13}$, $\Cl{4.14}$ such that for all $\lambda \in [-\Cr{4.13},0]$
and for all large $x \in \Z^d$,
\begin{align}\label{eq:herbst}
 \psi(\lambda) \leq \Cr{4.14} \lambda^2 \| x \|_1.
\end{align}
\end{prop}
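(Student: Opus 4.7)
The plan is to carry out a Herbst-style argument adapted to the entropy bound of Proposition~\ref{prop:ent_la}, using Corollary~\ref{cor:moment_la} to control the moment of $\hat{E}_\omega^{0,x}[\#\mathcal{A}]$ under a tilted measure. Write $U=a(0,x)$ and $F(\lambda)=\E[e^{\lambda U}]$, so that $\psi(\lambda)=\log F(\lambda)-\lambda\E[U]$ with $\psi(0)=\psi'(0)=0$. A direct computation gives
\begin{align*}
	\lambda\psi'(\lambda)-\psi(\lambda)
	= \frac{\lambda F'(\lambda)}{F(\lambda)}-\log F(\lambda)
	= \frac{Ent(e^{\lambda U})}{F(\lambda)},
\end{align*}
so that $g(\lambda):=\psi(\lambda)/\lambda$ (extended by $g(0)=0$) satisfies $g'(\lambda)=Ent(e^{\lambda U})/(\lambda^2 F(\lambda))$. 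Proposition~\ref{prop:ent_la} then yields
\begin{align*}
	g'(\lambda) \le \Cr{4.2}\, E_\mu\bigl[\hat{E}_\omega^{0,x}[\#\mathcal{A}]\bigr],
\end{align*}
where $\mu$ is the probability measure with density $e^{\lambda U}/F(\lambda)$ with respect to $\P$.

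Next I would split the random lattice-animal size as $\hat{E}_\omega^{0,x}[\#\mathcal{A}] \le \Cr{4.8} U + Y_x$ with $Y_x$ as in Corollary~\ref{cor:moment_la}, so that it suffices to bound $E_\mu[U]$ and $E_\mu[Y_x]$ separately, each by a constant multiple of $\|x\|_1$. For the first term, the convexity of the cumulant generating function $\log F$ implies that $(\log F)'$ is nondecreasing, so for $\lambda\le 0$,
\begin{align*}
	E_\mu[U] = \frac{F'(\lambda)}{F(\lambda)} = (\log F)'(\lambda) \le (\log F)'(0) = \E[U],
\end{align*}
and the deterministic bound $\E[U]\le(\E[\omega(0)]+\log(2d))\|x\|_1$ (obtained by summing along a shortest lattice path) finishes this piece.

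For the second term I would use the variational formula for the relative entropy: for any $\theta>0$,
\begin{align*}
	\theta E_\mu[Y_x] \le \log \E[e^{\theta Y_x}] + H(\mu\mid\P).
\end{align*}
Taking $\theta=\Cr{4.9}$ from Corollary~\ref{cor:moment_la} bounds the first summand by $\log \Cr{4.10}$, while
\begin{align*}
	H(\mu\mid\P) = \lambda E_\mu[U] - \log F(\lambda) \le -\log F(\lambda) \le |\lambda|\E[U]
\end{align*}
(using $\lambda E_\mu[U]\le 0$ and Jensen for the last step). Hence $E_\mu[Y_x]\le \Cr{4.9}^{-1}(\log\Cr{4.10}+|\lambda|\E[U])$, which is $O(\|x\|_1)$ for $\|x\|_1$ large and $|\lambda|\le\Cr{4.13}$. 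Combining the two bounds gives $g'(\lambda)\le\Cr{4.14}\|x\|_1$ on $[-\Cr{4.13},0]$, and integrating from $\lambda$ to $0$ yields $-g(\lambda)\le\Cr{4.14}\|x\|_1|\lambda|$, i.e., $\psi(\lambda)\le\Cr{4.14}\lambda^2\|x\|_1$, which is \eqref{eq:herbst}.

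The main obstacle is the control of $E_\mu[Y_x]$, since $Y_x$ has only finite exponential moments under $\P$; this is precisely why the proposition is restricted to $\lambda\in[-\Cr{4.13},0]$. Concretely, one needs the tilt's Radon--Nikodym exponent $\lambda U$ to be small enough (in $L^1(\mu)$) so that the entropic cost $H(\mu\mid\P)$ does not exceed the linear-in-$\|x\|_1$ budget provided by $\E[U]$; keeping $|\lambda|$ bounded does exactly this. Everything else reduces to the standard Herbst scheme and the deterministic linear bound on $\E[a(0,x)]$.
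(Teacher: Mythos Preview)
Your argument is correct and follows the same Herbst scheme as the paper: compute $\lambda\psi'(\lambda)-\psi(\lambda)=Ent(e^{\lambda U})/F(\lambda)$, feed in Proposition~\ref{prop:ent_la}, split $\hat E_\omega^{0,x}[\#\mathcal A]\le \Cr{4.8}\,U+Y_x$, bound $E_\mu[U]\le\E[U]$ (your convexity-of-$\log F$ argument is exactly the paper's Chebyshev association inequality), handle $E_\mu[Y_x]$ via the entropy variational formula with $\theta=\Cr{4.9}$, and integrate.

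The one genuine difference is how you dispose of the relative entropy term coming from the variational bound on $E_\mu[Y_x]$. The paper observes that $H(\mu\mid\P)=Ent(e^{\lambda U})/F(\lambda)$, applies Proposition~\ref{prop:ent_la} a second time to this entropy, and obtains a self-referential inequality for $\E[e^{\lambda U}\hat E_\omega^{0,x}[\#\mathcal A]]$; solving it forces the restriction $\Cr{4.2}\lambda^2/\Cr{4.9}<1$, which is the origin of $\Cr{4.13}$ in the paper. You instead bound $H(\mu\mid\P)\le|\lambda|\,\E[U]$ directly by Jensen, which avoids the bootstrap entirely and makes the role of the restriction $|\lambda|\le\Cr{4.13}$ more transparent (it merely keeps $|\lambda|\,\E[U]$ of linear order in $\|x\|_1$). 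Your route is a bit more elementary; the paper's route yields the clean intermediate inequality $Ent(e^{\lambda U})\le \Cr{4.14}\lambda^2\|x\|_1\,\E[e^{\lambda U}]$ as a standalone statement. Either way the conclusion and constants are of the same quality.
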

\begin{proof}
Let us first show that there exist constants $\Cr{4.13}$, $\Cr{4.14}$ such that
for all $\lambda \in [-\Cr{4.13},0]$ and for all large $x \in \Z^d$,
\begin{align}\label{eq:ent_cost}
 Ent(e^{\lambda a(0,x)}) \leq \Cr{4.14} \lambda^2 \| x \|_1 \E[e^{\lambda a(0,x)}].
\end{align}
To this end,
\begin{align*}
 \E \Bigl[ e^{\lambda a(0,x)} \hat{E}_\omega^{0,x}[\#\mathcal{A}] \Bigr]
 \leq \Cr{4.8}\E[e^{\lambda a(0,x)} a(0,x)]+\E[e^{\lambda a(0,x)}Y_x].
\end{align*}
By Chebyshev's association inequality (see \cite[Theorem~2.14]{BouLugMas13_book}),
the first term is bounded by
\begin{align*}
 \Cr{4.8}\E[e^{\lambda a(0,x)}] \E[a(0,x)].
\end{align*}
By choosing $W=\Cr{4.9} Y_x-\log \E[e^{\Cr{4.9} Y_x}]$ in \eqref{eq:ent_sup},
a standard relative entropy inequality derives the following bound for the second term:
\begin{align*}
	\Cr{4.9} \E[e^{\lambda a(0,x)}Y_x] \leq \E[e^{\lambda a(0,x)}]\log \E[e^{\Cr{4.9} Y_x}]+Ent(e^{\lambda a(0,x)}).
\end{align*}
With these observations, Proposition~\ref{prop:ent_la} enables us to see that
\begin{align*}
 \E \Bigl[ e^{\lambda a(0,x)} \hat{E}_\omega^{0,x}[\#\mathcal{A}] \Bigr]
 &\leq \Cr{4.8}\E[e^{\lambda a(0,x)}] \E[a(0,x)]+\Cr{4.9}^{-1}\E[e^{\lambda a(0,x)}]\log \E[e^{\Cr{4.9} Y_x}]\\
 &\quad
       +\Cr{4.9}^{-1}\Cr{4.2} \lambda^2 \E \Bigl[ e^{\lambda a(0,x)} \hat{E}_\omega^{0,x}[\# \mathcal{A}] \Bigr]
\end{align*}
or
\begin{align*}
 &\E \Bigl[ e^{\lambda a(0,x)} \hat{E}_\omega^{0,x}[\#\mathcal{A}] \Bigr]\\
 &\leq \biggl( 1-\frac{\Cr{4.2}\lambda^2}{\Cr{4.9}} \biggr)^{-1}
       \Bigl( \Cr{4.8}\E[a(0,x)]+\Cr{4.9}^{-1}\log\E[e^{\Cr{4.9}Y_x}] \Bigr) \E[e^{\lambda a(0,x)}].
\end{align*}
Thanks to Corollary~\ref{cor:moment_la}, there exists a constant $\Cl{4.17}$ such that
for all large $x \in \Z^d$,
\begin{align*}
 \Cr{4.8}\E[a(0,x)]+\Cr{4.9}^{-1}\log\E[e^{\Cr{4.9}Y_x}] \leq \Cr{4.17}\| x \|_1.
\end{align*}
Therefore, put $\Cr{4.13}:=\{ \Cr{4.9}/(2\Cr{4.2}) \}^{1/2}$ and
use Proposition~\ref{prop:ent_la} again to obtain that for all $\lambda \in [-\Cr{4.13},0]$
and for all large $x \in \Z^d$,
\begin{align*}
 Ent(e^{\lambda a(0,x)})
 &\leq \Cr{4.2} \Cr{4.17}\lambda^2 \biggl( 1-\frac{\Cr{4.2}\lambda^2}{\Cr{4.9}} \biggr)^{-1} \| x \|_1
       \E[e^{\lambda a(0,x)}]\\
 &\leq 2\Cr{4.2}\Cr{4.17}\lambda^2\| x \|_1 \E[e^{\lambda a(0,x)}],
\end{align*}
which implies \eqref{eq:ent_cost} by taking $\Cr{4.14}:=2\Cr{4.2}\Cr{4.17}$.

Let us prove \eqref{eq:herbst}.
It is clear in the case $\lambda=0$,
so that we treat the case $\lambda<0$.
Notice that
\begin{align*}
 \psi'(\lambda)
 =\frac{\E[a(0,x)e^{\lambda a(0,x)}]}{\E[e^{\lambda a(0,x)}]}-\E[a(0,x)].
\end{align*}
Since
\begin{align*}
 \lambda \psi'(\lambda) -\psi(\lambda)
 = \frac{Ent(e^{\lambda a(0,x)})}{\E[e^{\lambda a(0,x)}]},
\end{align*}
\eqref{eq:ent_cost} shows that for all $\lambda \in [-\Cr{4.13},0)$ and for all large $x \in \Z^d$,
\begin{align*}
 \lambda \psi'(\lambda) -\psi(\lambda)
 \leq \Cr{4.14} \lambda^2 \| x \|_1.
\end{align*}
Dividing this by $\lambda^2$, we have
\begin{align*}
 \frac{d}{d\lambda} \Bigl( \frac{\psi(\lambda)}{\lambda} \Bigr)
 = \frac{\lambda \psi'(\lambda) -\psi(\lambda)}{\lambda^2}
 \leq \Cr{4.14} \| x \|_1.
\end{align*}
Because $\psi(\lambda)/\lambda \nearrow 0$ as $\lambda \nearrow 0$,
this implies that for all $\lambda \in [-\Cr{4.13},0)$ and for all large $x \in \Z^d$,
\begin{align*}
 \frac{\psi(\lambda)}{\lambda} \geq \lambda \Cr{4.14}\| x \|_1,
\end{align*}
and the proof is complete.
\end{proof}

Once Proposition~\ref{prop:herbst} is proved, the proof of Theorem~\ref{thm:gaussian}
is straightforward as follows.

\begin{proof}[\bf Proof of Theorem~\ref{thm:gaussian}]
Proposition~\ref{prop:herbst} implies that for all $\lambda \in [-\Cr{4.13},0]$
and for all large $x \in \Z^d$,
\begin{align*}
 \P (a(0,x)-\E[a(0,x)] \leq -s\| x \|_1 )
 \leq e^{s\lambda \| x \|_1} e^{\psi(\lambda)}
 \leq e^{(s\lambda +\Cr{4.14} \lambda^2) \| x \|_1}.
\end{align*}
Set $\lambda=-s/(2\Cr{4.14})$ for the bound
\begin{align*}
 \P (a(0,x)-\E[a(0,x)] \leq -s\| x \|_1 )
 \leq \exp \biggl\{ -\frac{s^2}{4\Cr{4.14}}\| x \|_1 \biggr\},
 \qquad 0 \leq s \leq 2\Cr{4.13}\Cr{4.14}.
\end{align*}
Substituting $s=t\| x \|_1^{-1/2}$, one has for $t \in [0,2\Cr{4.13}\Cr{4.14} \| x \|_1^{1/2}]$,
\begin{align*}
 \P \bigl( a(0,x)-\E[a(0,x)] \leq -t\| x \|_1^{1/2} \bigr)
 \leq \exp \biggl\{ -\frac{t^2}{4\Cr{4.14}} \biggr\}.
\end{align*}
For $t>2\Cr{4.13}\Cr{4.14} \| x \|_1^{1/2}$, we choose a constant
$\Cl{4.18}$ satisfying $\Cr{4.18} \| x \|_1 \geq \E[a(0,x)]$.
Then, for $t \in (2\Cr{4.13}\Cr{4.14} \| x \|_1^{1/2},\Cr{4.18}\| x \|_1^{1/2}]$,
\begin{align*}
 &\P \bigl( a(0,x)-\E[a(0,x)] \leq -t\| x \|_1^{1/2} \bigr)\\
 &\leq \P (a(0,x)-\E[a(0,x)] \leq -2\Cr{4.13}\Cr{4.14}\| x \|_1)
 \leq e^{-\Cr{4.13}^2\Cr{4.14}\| x \|_1},
\end{align*}
which is bounded by $\exp\{ -(\Cr{4.13}^2\Cr{4.14}/\Cr{4.18}^2) t^2 \}$.
In addition, for $t>\Cr{4.18}\| x \|_1^{1/2}$,
\begin{align*}
 &\P \bigl( a(0,x)-\E[a(0,x)] \leq -t\| x \|_1^{1/2} \bigr)\\
 &\leq \P \bigl( \E[a(0,x)] \geq t\| x \|_1^{1/2} \bigr)
  \leq \P(\E[a(0,x)]>\Cr{4.18}\| x \|_1) =0.
\end{align*}
Therefore the proof is complete.
\end{proof}

\section*{Acknowledgements}
The author would like to express his profound gratitude to the reviewer for the very careful reading of the manuscript.
I also thank Makiko Sasada for a comment on Theorem~\ref{thm:conc} at Tokyo Probability Seminar.

\bibliographystyle{abbrv}
\bibliography{SRWRP_conc}

\end{document}